\documentclass[11pt,oneside]{amsart}
\usepackage{bm, calc,geometry, verbatim, graphicx, amssymb, color, mathabx, mathtools, enumitem, bm, mathrsfs, amsmath, amsthm, xspace,tabularx }
\usepackage[dvipsnames,svgnames,table]{xcolor}
\usepackage[pdftex,bookmarks,colorlinks,breaklinks]{hyperref}  
\usepackage{rotating}
\usepackage{adjustbox}
\usepackage{blindtext}
\usepackage{relsize}

\usepackage{comment}

\usepackage{array}
\usepackage{tikz}

\let\oldsum\sum
\let\oldprod\prod
\renewcommand{\sum}{\oldsum\nolimits}
\renewcommand{\prod}{\oldprod\nolimits}

\hypersetup{linkcolor=blue,citecolor=red,filecolor=dullmagenta,urlcolor=darkblue} 
\newtheorem{theorem}{Theorem}[section]
\newtheorem{corollary}[theorem]{Corollary}
\newtheorem{lemma}[theorem]{Lemma}
\newtheorem{conjecture}[theorem]{Conjecture}
\newtheorem{remark}[theorem]{Remark}

\newcommand\mult{\operatorname{\textup{{\fontfamily{ptm}\selectfont mult}}}}
\newcommand\dg{\operatorname{\textup{{\fontfamily{ptm}\selectfont deg}}}}

\DeclareMathOperator{\FF}{\mathfrak{F}}

      \makeatletter
      \def\@setcopyright{}
      \def\serieslogo@{}
      \makeatother

\begin{document}
   \author{Amin  Bahmanian}
   \address{Department of Mathematics,
  Illinois State University, Normal, IL USA 61790-4520}
   \author{Sho  Suda}
   \address{Department of Mathematics, National Defense Academy of Japan, 239-8686 Japan}

\title[Sudoku Analogues of Baranyai's Theorem]{Sudoku Analogues of Baranyai's Theorem}

\begin{abstract}
Motivated by higher-dimensional generalizations of Sudoku, we study exact block-structured decompositions, algebraic characterizations, and orthogonality for Sudoku hypercubes. Let $n=\prod_{i=1}^d a_i$, let $b_i=n/a_i$, and consider the $\lambda$-fold complete $d$-uniform $d$-partite hypergraph with $d$ vertex classes of size $n$, where the $i$th class is partitioned into $a_i$ groups of size $b_i$. Given positive integers $m_1,\dots,m_k$ with $\sum_{j=1}^k m_j=\lambda n^d$, we partition the edges into color classes of sizes $m_1,\dots,m_k$ so that, in color $j$, vertex degrees and block counts are each either $\lfloor m_j/n\rfloor$ or $\lceil m_j/n\rceil$, while the multiplicity of an underlying edge is either $\lfloor m_j/n^d\rfloor$ or $\lceil m_j/n^d\rceil$. When $m_j=nr_j$, the vertex and block balances are exact, yielding block factorizations and higher-dimensional Sudoku analogues of Baranyai's theorem.

Within the same block framework, we give a Delsarte characterization of the Sudoku condition using association schemes and study mutually orthogonal Sudoku hypercubes of order $q^3$ for prime powers $q$. For block sizes $(q^3,q^2,q)$ and $(q^3,q^3,1)$, the resulting families attain a general upper bound and are best possible. For block size $(q^2,q^2,q^2)$, we construct $q^2(q^2-1)(q^2-q)$ mutually orthogonal hypercubes; this construction is asymptotically best possible as $q\to\infty$.
\end{abstract}

\subjclass[2020]{Primary 05C70; Secondary 05B15, 05C65, 05E30}  
\keywords{Sudoku, Baranyai's theorem, Latin hypercubes, orthogonality,
hypergraph factorization, block factorization, association schemes,
Delsarte designs, finite fields}
   \maketitle   

\section{Introduction}
\label{Intro}

In classical Sudoku, the objective is to fill a $9\times9$ array with
the symbols in $\{1,\dots,9\}$ so that every row, every column, and
each of the nine $3\times3$ subarrays, called \emph{blocks}, contains
every symbol exactly once. Motivated by this structure, we study
higher-dimensional arrays subject to analogous conditions on their
coordinate hyperplanes and blocks. To construct these arrays, we prove
a block-structured strengthening of Baranyai's hypergraph
decomposition theorem~\cite{MR535941}.

Let $[n]=\{1,\ldots,n\}$. A \emph{hypergraph} $G=(V,E)$ consists
of a finite set $V$ of \emph{vertices} and a multiset $E$ of subsets
of $V$, called \emph{edges}. The degree of $v\in V$, denoted by
$\deg_G(v)$, is the number of edges containing $v$, counted with
multiplicity. Throughout, all colorings are edge-colorings. A
$k$-coloring of $G$ assigns a color in $[k]$ to each edge occurrence.
For $j\in[k]$, let $G(j)$ denote the spanning subhypergraph consisting
of the edge occurrences of color $j$. The coloring is an
\emph{$(r_1,\ldots,r_k)$-factorization} if
$\deg_{G(j)}(v)=r_j$ for $v\in V$ and $j\in[k]$. When
$r_1=\cdots=r_k=r$, we call it an \emph{$r$-factorization}.

The complete $d$-uniform $d$-partite hypergraph with $d$ parts of
size $n$, denoted by $K_{n\times d}^d$, has vertex classes
$V_1,\ldots,V_d$ and one edge for every choice of one vertex from
each class. A hypergraph is \emph{almost regular} if the degrees of
any two vertices differ by at most one. A classical conjecture
attributed to Sylvester asserted that the complete $h$-uniform
hypergraph $K_n^h$ is $1$-factorable whenever $h\mid n$. Baranyai
proved this conjecture and, more generally, showed that the edges of
$K_n^h$ can be partitioned into spanning almost regular hypergraphs
of arbitrarily prescribed sizes summing to $\binom{n}{h}$
\cite{MR0416986}. He also settled the corresponding factorization
problem for complete multipartite uniform hypergraphs
\cite{MR535941}.

In the monumental work \cite{KeevashDesignsII}, Keevash developed a broad theory of hypergraph designs and decompositions encompassing numerous Baranyai-type questions and applications to Sudoku. The present paper concerns exact block-structured decompositions together with their connections to Sudoku hypercubes, association schemes, and orthogonality.

For a positive integer $\lambda$, let $\lambda G$ denote the
hypergraph obtained from $G$ by replacing every edge by $\lambda$
copies. A hypergraph is \emph{simple} if it has no repeated edges.

We first strengthen this multipartite setting by imposing a fixed Cartesian block structure on the vertex classes. For arbitrary prescribed color-class sizes, we simultaneously balance vertex degrees, the number of edges of each color in every block, and the multiplicity of each underlying edge. When the color-class sizes are multiples of $n$, the vertex and block balances become exact. Under the natural correspondence between edge-colorings of $K_{n\times d}^d$ and fillings of $d$-dimensional arrays, these factorizations yield, as a special case, higher-dimensional Sudoku hypercubes.

The same prescribed block structure also leads to two further directions. For a single hypercube, we give a Delsarte characterization of the Sudoku condition using a product of wreath-product association schemes. For families of hypercubes with the same block partition, we study orthogonality. A general counting bound is combined with finite-field constructions for three natural block structures of order $q^3$. For block sizes $(q^3,q^2,q)$ and $(q^3,q^3,1)$ the resulting families are best possible, while for $(q^2,q^2,q^2)$ the construction is asymptotically best possible.

\section{Block Decompositions and Sudoku Hypercubes}
\label{BlockSudoku}

Let
\[
n=\prod_{i\in[d]}a_i,
\qquad
b_i=\frac{n}{a_i}\quad\text{for }i\in[d].
\]
For $i\in[d]$, partition the vertex class $V_i$ of
$K_{n\times d}^d$ as
\[
V_i=\mathop{\dot\bigcup}_{s\in[a_i]}V_i^s,
\qquad
|V_i^s|=b_i.
\]
A $(b_1,\dots,b_d)$-\emph{block} is obtained by choosing one group
$V_i^{s_i}$ from each vertex class $V_i$ and taking the subhypergraph
induced by their union. There are
$\prod_{i\in[d]}a_i=n$ such blocks, each isomorphic to
$K_{b_1,\dots,b_d}^d$. We use the same terminology for the
corresponding subhypergraphs of $\lambda K_{n\times d}^d$.

We call an $(r_1,\dots,r_k)$-factorization of
$\lambda K_{n\times d}^d$ a
$(b_1,\dots,b_d)$-\emph{block $(r_1,\dots,r_k)$-factorization} if
every block contains exactly $r_j$ edges of color $j$ for
$j\in[k]$.

For a hypergraph $H$ and a subset $e\subseteq V(H)$, let
$\mult_H(e)$ denote the multiplicity of $e$ in $H$. For real $x$
and $y$, write $x\approx y$ if
$\lfloor y\rfloor\le x\le\lceil y\rceil$. The following theorem
simultaneously controls the size and vertex degrees of every color
class, the number of its edges in every prescribed block, and the
multiplicity of every underlying edge.

\subsection{The Main Block Decomposition Theorem}

\begin{theorem}
\label{Main}
Let $G=\lambda K_{n\times d}^d$, where
$n=\prod_{i\in[d]}a_i$ and $b_i=n/a_i$, and fix a
$(b_1,\dots,b_d)$-block partition as above. If
$m_1,\dots,m_k$ are positive integers satisfying
\[
\sum_{j\in[k]}m_j=\lambda n^d,
\]
then $G$ has a $k$-coloring such that
\[
\begin{aligned}
|E(G(j))|&=m_j,
&
\deg_{G(j)}(v)&\approx\frac{m_j}{n},\\
|E(G(j))\cap E(B)|&\approx\frac{m_j}{n},
&
\mult_{G(j)}(e)&\approx\frac{m_j}{n^d},
\end{aligned}
\]
for $j\in[k]$, $v\in V(G)$, $(b_1,\dots,b_d)$-blocks $B$, and
edges $e$ of $K_{n\times d}^d$. In particular, for $j\in[k]$,
$G(j)$ is simple if and only if $m_j\le n^d$.
\end{theorem}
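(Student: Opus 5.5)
We would prove Theorem~\ref{Main} by Baranyai's rounding method: build a fractional coloring in which every quantity equals its ideal value exactly, then round it while keeping all required sums within floor and ceiling. The basic tool is the integral flow theorem. If a network has real flow values, there is an integral flow whose value on each arc, and the total value, are the floor or ceiling of the real ones.

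For the fractional starting point, give each edge $e$ of $K_{n\times d}^d$ the weight $m_j/n^d$ in color $j$. Every required quantity is then exact. The multiplicity of $e$ in color $j$ is $m_j/n^d$, and color $j$ has total size $\sum_e m_j/n^d = m_j$. A vertex $v$ lies in $n^{d-1}$ edges, so its color-$j$ degree is $m_j/n$. A block contains $\prod_i b_i = n^{d}/n = n^{d-1}$ edges, so its color-$j$ count is also $m_j/n$. Finally, each edge receives total weight $\sum_j m_j/n^d=\lambda$.

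Next we round in $d$ stages, one coordinate at a time, as in Baranyai's inductive proof. After stage $t$, each entry should be an integer-valued function $f_t(x_1,\dots,x_t,j)$. This function records how many color-$j$ edges are assigned with prescribed first $t$ coordinates. Stage $0$ fixes only the color-class sizes, $f_0(j)=m_j$.

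Going from stage $t-1$ to stage $t$ requires splitting each value $f_{t-1}(x_1,\dots,x_{t-1},j)$ among the $n$ choices of $x_t\in V_t$. The target share is the proportional fraction $f_{t-1}/n$, and the split must satisfy several constraints at once:
\begin{enumerate}
\item for each fixed prefix, the total over colors is exactly $\lambda n^{d-t}$;
\item for each fixed $x_t$ and color $j$, the marginal sums stay near-balanced;
\item for each group $V_t^{s}$, the sums over $x_t\in V_t^{s}$ stay near-balanced, which is what yields the block counts.
\end{enumerate}

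The main point is that these constraints form a laminar, tree-like family in each of two directions. Group sums refine into individual $x_t$. Colors aggregate into the fixed total per prefix. Coordinate prefixes nest. Laminar families on both sides of a bipartite-type structure give a network matrix, which is totally unimodular. Hence the fractional solution can be rounded with every constrained sum kept at floor or ceiling. Concretely, we build a network as follows:
\begin{enumerate}
\item source $\to$ color $j$;
\item color $j$ $\to$ node $(\text{prefix},j)$;
\item $(\text{prefix},j)$ $\to$ node $(\text{prefix},j,\text{group }s)$;
\item $(\text{prefix},j,s)$ $\to$ node $(\text{prefix},x_t)$;
\item $(\text{prefix},x_t)$ $\to$ sink, with capacity exactly $\lambda n^{d-t}$.
\end{enumerate}

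At the last stage $t=d$, the value $f_d(e,j)$ is the color-$j$ multiplicity of edge $e$. It lies within floor or ceiling of $m_j/n^d$, because it arises from successive proportional splits whose accumulated rounding we must control.

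The main obstacle is this control. A single-stage network cannot encode vertex degrees in earlier coordinates, block counts, and edge multiplicities at once. Block counts involve the group of every coordinate simultaneously, and degree in coordinate $i$ sums over all the other coordinates. The first fix we would try is the one Baranyai used. Strengthen the inductive hypothesis so that, after stage $t$, each partial quantity is itself near its ideal value. The quantities tracked are:
\begin{itemize}
\item each prefix-and-color entry, near $m_j/n^t$;
\item each per-vertex marginal for $i\le t$;
\item each partial-block count, that is, each choice of groups in coordinates $\le t$, near $m_j/(a_1\cdots a_t)$.
\end{itemize}

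Each stage's network then carries nested arcs: color, then earlier block-group tuple, then prefix. It also carries sink-side arcs grouped by $x_t$, so that all old and new marginals are arcs and stay within floor or ceiling. Verifying that this nesting really is a network, with no crossing families, is the delicate check. If it fails, we would fall back to ordering the coordinates so that the block-group refinement is processed first.

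Once the theorem is proved, the simplicity clause follows directly. If $m_j\le n^d$, then $\lceil m_j/n^d\rceil\le1$, so every color-$j$ multiplicity is at most $1$ and $G(j)$ is simple. If $m_j>n^d$, then $G(j)$ has more edges than there are distinct edges of $K_{n\times d}^d$, so it must repeat an edge.
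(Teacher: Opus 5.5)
There is a genuine gap at the central rounding step, and it is the check you flag as delicate: it fails. In the network you write down, each arc $j\to(p,j)$ carries the integer $f_{t-1}(p,j)$, so rounding leaves it unchanged, and everything downstream of $(p,j)$ stays inside the prefix $p$. The network therefore splits into independent transportation problems, one per prefix. No arc carries the color-$j$ degree $\sum_p f_t(p,x_t,j)$ of a vertex $x_t\in V_t$. No arc carries a partial block count $\sum_{p}\sum_{x_t\in V_t^{s_t}}f_t(p,x_t,j)$ either, where $p$ ranges over prefixes lying in fixed groups $s_1,\dots,s_{t-1}$. Per-prefix rounding errors can therefore add up over the $n^{t-1}$ prefixes.

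These sums cannot be added to the network either. At stage $t$ you must control three sums of the $3$-index array $f_t(p,x_t,j)$: over $x_t$ (exactly $f_{t-1}(p,j)$), over $j$ (exactly $\lambda n^{d-t}$), and over $p$ (the degree of $x_t$). These three families of lines pairwise cross, so they cannot be split between two laminar families, and the block sums cross all of them. This is a planar three-index transportation problem, whose polytope is not integral in general, so total unimodularity is not available.

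Reordering the coordinates does not help, because the obstruction already appears at $t=2$ with no block structure. For example, with $d=2$, $\lambda=1$, $k=n$ and every $m_j=n$, your stage-$2$ problem is exactly the construction of a Latin square of order $n$. The edge multiplicities, by contrast, are harmless, since $\lfloor\lfloor y\rfloor/n\rfloor=\lfloor y/n\rfloor$.

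The missing idea is to refine one vertex at a time rather than one coordinate at a time. The paper keeps an amalgamated vertex $x$ of weight $q$, its value of $g$ or $h$, which counts the final vertices it still represents. It then peels off a single new vertex $\alpha$ that receives a $\approx 1/q$ share of the flags at $x$. No other vertex's degree changes, so the only constraints concern flags at $x$:
\begin{itemize}
\item the color classes $\FF_j(x)$ form one laminar family;
\item the edge classes $\FF(x,S)$, together with their refinements $\FF_j(x,S)$, form the other.
\end{itemize}
Nash--Williams' lemma, which is a two-dimensional matrix rounding, then applies. Since $\mult(f)/q\in\mathbb Z$, the total multiplicities come out exact. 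The weighted invariants $\dg_j(u)/g(u)\approx m_j/a_i$ and $\mult_j(e)/\prod_{v\in e}g(v)\approx m_j/n$, and their $h$-analogues, keep rounding errors from accumulating.

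Your fallback of processing the group structure first is the right instinct; it works once combined with vertex-by-vertex splitting. First split the $d$ initial vertices into $a_i$ vertices each, one per group. Each block is then a single transversal edge, and its color-$j$ count is that edge's color-$j$ multiplicity, already $\approx m_j/n$. Afterwards split each group vertex into $b_i$ vertices within its own group; this never moves an edge between blocks. Your simplicity argument at the end is fine.
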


When $m_j=nr_j$ for $j\in[k]$, the vertex and block balances
in Theorem~\ref{Main} become exact, while
\[
\mult_{G(j)}(e)\approx\frac{r_j}{n^{d-1}}
\]
for edges $e$ of $K_{n\times d}^d$. Hence
$\lambda K_{n\times d}^d$ has a simple
$(b_1,\dots,b_d)$-block $(r_1,\dots,r_k)$-factorization if and only if
\[
\sum_{j\in[k]}r_j=\lambda n^{d-1}
\qquad\text{and}\qquad
r_j\le n^{d-1}\quad\text{for }j\in[k].
\]
In particular, $\lambda K_{n\times d}^d$ has a simple
$(b_1,\dots,b_d)$-block $r$-factorization if and only if
$r\mid\lambda n^{d-1}$ and $r\le n^{d-1}$.

\subsection{Sudoku hypercubes}

A \emph{hypercube} $H$ of order $n$ and dimension $d$ on a symbol
set $S$ is a function
\[
H:[n]^d\longrightarrow S.
\]
Equivalently, it is a $d$-dimensional array indexed by $[n]^d$ in
which each cell contains exactly one symbol from $S$. Colorings of
$K_{n\times d}^d$ correspond naturally to such fillings: vertices in
$V_i$ correspond to values of the $i$th coordinate, edges correspond
to cells, and colors correspond to symbols. The prescribed partitions
of the vertex classes induce partitions of the coordinate sets, and
the $(b_1,\dots,b_d)$-blocks of $K_{n\times d}^d$ correspond exactly
to the Cartesian blocks obtained by choosing one group from each
coordinate partition. For $\ell\in[d]$, an \emph{$\ell$-layer} of
$H$ is obtained by fixing $d-\ell$ coordinates and allowing the
remaining $\ell$ coordinates to vary. A \emph{hyperplane} is a
$(d-1)$-layer.

Motivated by applications in the design of experiments
\cite{MR13113,MR34743} and higher-dimensional algebras
\cite{MR0004235}, Latin hypercubes have been studied since the
1940s. For positive integers $n,d,m$ and $0\le t\le d-m$, an
\emph{$(n,d,m,t)$ Latin hypercube} is a hypercube of order $n$ and
dimension $d$ on a symbol set of size $n^m$ such that every
$(d-t)$-layer contains each symbol exactly $n^{d-m-t}$ times. In
particular, a Latin square
of order $n$ is an $(n,2,1,1)$ Latin hypercube. The class
$(n,d,d-1,1)$ arises naturally in the design of experiments
\cite{MR13113,MR34743}, while $(n,d,1,d-1)$ Latin hypercubes have
been viewed as higher-dimensional analogues of permutations
\cite{MR3259813} and of Latin squares \cite{MR2399374}. 

For recent work on symmetric Latin cubes, see \cite{MR4665304};
for related embedding problems in the three-dimensional setting, see
\cite{MR4728465}. Huggan, Mullen, Stevens, and Thomson
\cite{MR3600882} introduced higher-dimensional Sudoku hypercubes. In
particular, they considered hypercubes of order $a^d$ with block size
$(a,\dots,a)$ and, when $a$ is a prime power, constructed such
hypercubes.

For our block-decomposition setting, we instead consider
$(n,d,d-1,1)$ Latin hypercubes. Given the prescribed coordinate
partitions above, we call an $(n,d,d-1,1)$ Latin hypercube an
\emph{$(n,d,d-1,1)$ Sudoku hypercube of block size
$(b_1,\dots,b_d)$} if every prescribed Cartesian block contains each
symbol exactly once. When the parameters are clear, we simply call it
a \emph{Sudoku hypercube}. In dimension two, this gives the usual
generalized Sudoku condition; classical Sudoku is the case $n=9$ with
block size $(3,3)$.

For a positive integer $\lambda$, a \emph{$\lambda$-fold
hypercube} of order $n$ and dimension $d$ on a symbol set $S$ assigns
to each cell of $[n]^d$ a multiset of exactly $\lambda$ symbols from
$S$. An \emph{$(n,d,m,t,\lambda)$ Latin hypercube} is a
$\lambda$-fold hypercube on a symbol set of size $n^m$ such that every
$(d-t)$-layer contains each symbol exactly $\lambda n^{d-m-t}$ times.
It is \emph{simple} if no symbol is repeated within a cell.

\begin{corollary}
Let $n=\prod_{i\in[d]}a_i$ and let $b_i=n/a_i$ for $i\in[d]$.
If $r_1,\dots,r_k$ are positive integers satisfying
\[
\sum_{j\in[k]}r_j=\lambda n^{d-1},
\]
then there exists a $\lambda$-fold hypercube of order $n$ and
dimension $d$ on $k$ symbols such that every hyperplane and every
$(b_1,\dots,b_d)$-block contains exactly $r_j$ occurrences of
symbol $j$, and the multiplicity of symbol $j$ in every cell is
either
\[
\left\lfloor\frac{r_j}{n^{d-1}}\right\rfloor
\qquad\text{or}\qquad
\left\lceil\frac{r_j}{n^{d-1}}\right\rceil
\]
for $j\in[k]$. Moreover, the $\lambda$-fold hypercube can be
chosen simple if and only if
\[
r_j\le n^{d-1}\qquad\text{for }j\in[k].
\]
\end{corollary}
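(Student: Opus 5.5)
The plan is to apply Theorem~\ref{Main} with $m_j=nr_j$ and then read the resulting coloring as a filling of the array. Set $G=\lambda K_{n\times d}^d$ with the fixed $(b_1,\dots,b_d)$-block partition. Then
\[
\sum_{j\in[k]}m_j=n\sum_{j\in[k]}r_j=\lambda n^d ,
\]
so Theorem~\ref{Main} gives a $k$-coloring with the following properties for every $j\in[k]$:
\begin{itemize}
\item since $m_j/n=r_j$ is an integer, $\deg_{G(j)}(v)=r_j$ for every vertex $v$;
\item $|E(G(j))\cap E(B)|=r_j$ for every block $B$;
\item $\mult_{G(j)}(e)\approx r_j/n^{d-1}$ for every edge $e$ of $K_{n\times d}^d$.
\end{itemize}

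Next I translate this coloring into a hypercube using the correspondence described in the text. Identify $V_i$ with $[n]$, the $i$th coordinate, so that edges of $K_{n\times d}^d$ become cells of $[n]^d$. For each cell $e$, put into it the multiset of colors carried by the $\lambda$ copies of $e$ in $G$. This gives a $\lambda$-fold hypercube on $k$ symbols.

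I then check the three required properties.
\begin{itemize}
\item \emph{Hyperplanes.} A hyperplane fixes one coordinate $i$ at a value $v\in V_i$. The occurrences of symbol $j$ in it are exactly the edges of $G(j)$ through $v$, so there are $\deg_{G(j)}(v)=r_j$ of them.
\item \emph{Blocks.} The Cartesian blocks of the array are exactly the $(b_1,\dots,b_d)$-blocks of the hypergraph. Hence each block contains exactly $r_j$ occurrences of symbol $j$.
\item \emph{Cells.} The multiplicity of symbol $j$ in cell $e$ is $\mult_{G(j)}(e)$, which lies between $\lfloor r_j/n^{d-1}\rfloor$ and $\lceil r_j/n^{d-1}\rceil$.
\end{itemize}

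For the ``moreover'' part, first suppose $r_j\le n^{d-1}$ for all $j$. Then $\lceil r_j/n^{d-1}\rceil\le 1$, so no symbol repeats in a cell and the hypercube is simple. This is also the final clause of Theorem~\ref{Main}, since $m_j\le n^d$ here. Conversely, suppose some simple $\lambda$-fold hypercube has the hyperplane property. Fix a hyperplane; it consists of $n^{d-1}$ cells, and each cell contains symbol $j$ at most once. So symbol $j$ occurs at most $n^{d-1}$ times in that hyperplane, which forces $r_j\le n^{d-1}$.

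No step here is a real obstacle. The only care needed is bookkeeping the correspondence: degree corresponds to occurrences in a hyperplane, blocks correspond to Cartesian blocks, and edge multiplicity in $G(j)$ corresponds to the multiplicity of symbol $j$ in a cell. Everything substantive is already supplied by Theorem~\ref{Main}.
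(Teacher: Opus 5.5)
Your proposal is correct and follows essentially the same route as the paper: set $m_j=nr_j$ in Theorem~\ref{Main}, observe that integrality makes the vertex and block balances exact while $\mult_{G(j)}(e)\approx r_j/n^{d-1}$, and read off the hypercube via the edge--cell correspondence. For the converse of the simplicity clause you count occurrences in a single hyperplane, whereas the paper uses the total count $m_j>n^d$ from Theorem~\ref{Main}; this is only a minor difference in bookkeeping, and both arguments are valid.
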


Taking $m\in[d-1]$, $k=n^m$, and
$r_j=\lambda n^{d-m-1}$ for $j\in[k]$ gives an
$(n,d,m,1,\lambda)$ Latin hypercube in which every
$(b_1,\dots,b_d)$-block contains each symbol exactly
$\lambda n^{d-m-1}$ times. Moreover, the multiplicity of each symbol
in every cell is either
\[
\left\lfloor\frac{\lambda}{n^m}\right\rfloor
\qquad\text{or}\qquad
\left\lceil\frac{\lambda}{n^m}\right\rceil,
\]
and this $\lambda$-fold hypercube can be chosen simple if and only if
$\lambda\le n^m$. We regard these as generalized Sudoku-type
$\lambda$-fold hypercubes.
Figure~\ref{SudokuFig} illustrates a $(4,3,2,1)$ Sudoku hypercube with
block size $(4,2,2)$, displayed in its four parallel layers.

\begin{figure}[htbp]
\centering
\includegraphics[width=\textwidth]{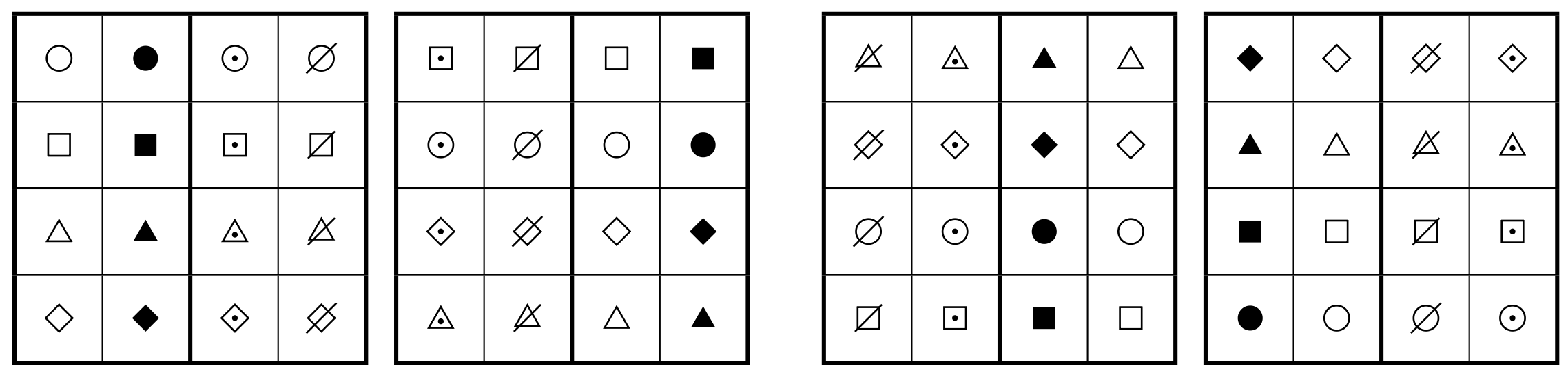}
\caption{A $(4,3,2,1)$ Sudoku hypercube with block size $(4,2,2)$.
Heavy vertical lines and the larger gap between pairs of layers indicate
the block boundaries.}
\label{SudokuFig}
\end{figure}

\subsection{Balance and a Delsarte Characterization}
\label{Delsarte}

For the remainder of this subsection, assume $d\ge2$. Let $X=[n]^d$ be the cell set. For $i\in[d]$, let
$\mathcal H_i$ be the partition of $X$ into the $n$ hyperplanes
obtained by fixing the $i$th coordinate. Each such hyperplane has
$n^{d-1}$ cells. Let $\mathcal B$ be the partition into the
prescribed Cartesian blocks. Since
$\prod_{i\in[d]}a_i=n$ and $\prod_{i\in[d]}b_i=n^{d-1}$,
the partition $\mathcal B$ also has $n$ parts, each of size
$n^{d-1}$.

Theorem~\ref{Main} has a particularly simple interpretation
when $\lambda=1$. If $m_1,\dots,m_k$ are positive integers with
$\sum_{j\in[k]}m_j=n^d$, then $X$ has a partition
$X=C_1\cup\cdots\cup C_k$, with $|C_j|=m_j$ for $j\in[k]$, such that
\[
|C_j\cap P|
\in
\left\{
\left\lfloor\frac{m_j}{n}\right\rfloor,
\left\lceil\frac{m_j}{n}\right\rceil
\right\}
\]
for $j\in[k]$ and for every coordinate hyperplane or Cartesian block
$P$. In particular, if $n\mid m_j$, then
$|C_j\cap P|=m_j/n$ for every such $P$.

The partitions above fit naturally into the theory of orthogonal
block structures and association schemes; see
Anagnostopoulou-Merkouri, Bailey, and Cameron
\cite{MR4978378}. We shall only use the association-scheme language
needed for the characterization below.

We next give an algebraic characterization of the Sudoku condition
using association schemes. We recall the terminology needed below.
Let $\Omega$ be a finite set, and let $R_0,\dots,R_r$ be symmetric
binary relations on $\Omega$ with adjacency matrices
$A_0,\dots,A_r$. These relations form a \emph{$r$-class symmetric association
scheme} if $A_0=I$, $\sum_{i=0}^r A_i=J$, and $A_iA_j$ is a
nonnegative integral linear combination of $A_0,\dots,A_r$ for
$i,j\in\{0\}\cup[r]$. Since $A_iA_j$ is a linear combination of
symmetric matrices, it is symmetric, and hence
$A_iA_j=(A_iA_j)^\top=A_jA_i$. Thus the matrices $A_i$ commute, and
since they are real symmetric, they are simultaneously orthogonally
diagonalizable.

Because the matrices $A_0,\dots,A_r$ are simultaneously
orthogonally diagonalizable, $\mathbb R^\Omega$ decomposes into their
common eigenspaces. We let $E_0,\dots,E_r$ be the orthogonal
projections onto these maximal eigenspaces. These are the
\emph{primitive idempotents} of the scheme, with
$E_0=|\Omega|^{-1}J$. If $C\subseteq\Omega$ has characteristic
vector $\chi_C$ and $T\subseteq[r]$, then $C$ is a
\emph{Delsarte $T$-design} if $E_i\chi_C=0$ for $i\in T$.
For background on association schemes and Delsarte designs, see
\cite{Bailey2004,Delsarte1973,Delsarte1976,Delsarte1977}.

We shall also use the standard direct product
$\mathfrak A\otimes\mathfrak B$ and wreath product
$\mathfrak A\wr\mathfrak B$ of association schemes. For our purposes,
it is enough to describe their primitive idempotents. If
$\mathfrak A$ and $\mathfrak B$ have primitive idempotents
$E_0,\dots,E_r$ and $F_0,\dots,F_s$, respectively, then the primitive
idempotents of $\mathfrak A\otimes\mathfrak B$ are
$E_i\otimes F_j$ for $i\in\{0\}\cup[r]$ and
$j\in\{0\}\cup[s]$. With our convention, the primitive idempotents
of $\mathfrak A\wr\mathfrak B$ are $E_0\otimes F_j$ for
$j\in\{0\}\cup[s]$, together with $E_i\otimes I$ for $i\in[r]$.

For $m\ge2$, we let $H(1,m)$ denote the one-class association scheme
on $m$ points. Its adjacency matrices and primitive idempotents are
\[
A_0^{(m)}=I_m,\qquad A_1^{(m)}=J_m-I_m,\qquad
E_0^{(m)}=\frac1mJ_m,\qquad
E_1^{(m)}=I_m-\frac1mJ_m.
\]
For $m=1$, we let $E_0^{(1)}=I_1$ and $E_1^{(1)}=0$, and regard
$H(1,1)$ as the trivial association scheme.

For $j\in[d]$, we identify the $j$th coordinate set $[n]$ with
$[b_j]\times[a_j]$ so that the prescribed groups in the $j$th
coordinate are $[b_j]\times\{s\}$ for $s\in[a_j]$. We let
\[
F_0^{(j)}
=E_0^{(b_j)}\otimes E_0^{(a_j)},\qquad
F_1^{(j)}
=E_0^{(b_j)}\otimes E_1^{(a_j)},\qquad
F_2^{(j)}
=E_1^{(b_j)}\otimes I_{a_j}.
\]
When $a_j,b_j\ge2$, these are the primitive idempotents of
$H(1,b_j)\wr H(1,a_j)$. If $a_j=1$ or $b_j=1$, $H(1,b_j)\wr H(1,a_j)=H(1,n)$ and one of
$F_1^{(j)}$ and $F_2^{(j)}$ is zero, and the nonzero matrices among
$F_0^{(j)},F_1^{(j)},F_2^{(j)}$ are the primitive idempotents.
We relabel the two nonzero primitive idempotents as $F_0^{(j)}=n^{-1}J$ and  $F_1^{(j)}$. 

These projections have a simple interpretation. The image of
$F_0^{(j)}$ consists of the constant vectors. The image of
$F_1^{(j)}$ consists of the vectors that are constant on each
prescribed group in the $j$th coordinate and have total sum zero,
while the image of $F_2^{(j)}$ consists of the vectors whose sum on
each such group is zero. Thus $F_1^{(j)}$ records variation between
the prescribed groups, whereas $F_2^{(j)}$ records variation within
them. In particular,
\[
F_1^{(j)}+F_2^{(j)}=E_1^{(n)}.
\]

We let $N=n^{d-1}$ and
\[
\mathfrak X=
(H(1,b_1)\wr H(1,a_1))\otimes\cdots\otimes
(H(1,b_d)\wr H(1,a_d))\otimes H(1,N),
\]
on the underlying set
\[
\Omega=
\prod_{j\in[d]}([b_j]\times[a_j])\times[N].
\]
Its nonzero primitive idempotents are the nonzero tensor products
\[
F_{i_1}^{(1)}\otimes\cdots\otimes F_{i_d}^{(d)}
\otimes E_i^{(N)},
\]
where $i_j\in\{0,1,2\}$ for $j\in[d]$ and $i\in\{0,1\}$.

Let $H:[n]^d\to[N]$ be a hypercube. Under the above identification
of its coordinate sets, we identify $H$ with the graph of its symbol
assignment,
\[
C=\{(x_1,\dots,x_d,H(x_1,\dots,x_d)):
x_j\in[b_j]\times[a_j]\text{ for }j\in[d]\}\subseteq\Omega,
\]
and we let $\chi$ be the characteristic vector of $C$.

We may encode $C$ as a two-dimensional table with $n^d$ rows and
$d+1$ columns. The row corresponding to the cell
$x=(x_1,\dots,x_d)$ is $(x_1,\dots,x_d,H(x))$. Thus the first $d$
columns record the coordinates of the cell and the last column records
its symbol. Each coordinate column takes $n$ possible values, while
the symbol column takes $N$ possible values; in the terminology of
orthogonal arrays, these possible values are called \emph{levels}.
Any two coordinate columns are automatically balanced, since for two
fixed coordinate values there are exactly $n^{d-2}$ cells having those
values. A coordinate column and the symbol column are balanced if and
only if every ordered pair consisting of a coordinate value and a
symbol occurs exactly once, which is equivalent to saying that every
hyperplane obtained by fixing that coordinate contains every symbol
exactly once. Hence $H$ is an $(n,d,d-1,1)$ Latin hypercube if and
only if every pair of columns in this table is balanced. Equivalently,
the table is a mixed-level orthogonal array of strength $2$.

By the Delsarte characterization of mixed-level orthogonal arrays
\cite[Example~2.5]{Martin1999}, the table above has strength $2$ if
and only if
\[
\left(
E_{\varepsilon_1}^{(n)}\otimes\cdots\otimes
E_{\varepsilon_d}^{(n)}\otimes E_{\varepsilon}^{(N)}
\right)\chi=0
\]
whenever $\varepsilon_1,\dots,\varepsilon_d,\varepsilon\in\{0,1\}$
and exactly one or two of them are equal to $1$. Since
$E_1^{(n)}=F_1^{(j)}+F_2^{(j)}$ in the $j$th coordinate, with
$F_1^{(j)}$ and $F_2^{(j)}$ orthogonal projections, this is
equivalent in $\mathfrak X$ to
\[
\left(F_{i_1}^{(1)}\otimes\cdots\otimes F_{i_d}^{(d)}
\otimes E_i^{(N)}\right)\chi=0
\]
whenever $i_j\in\{0,1,2\}$ for $j\in[d]$, $i\in\{0,1\}$, and
exactly one or two of $i_1,\dots,i_d,i$ are nonzero. 
Equivalently, $C$ is a Delsarte $T$-design, where 
\[T=\left\{(i_1,\ldots,i_d,i)\in\{0,1,2\}^d\times\{0,1\}
\mid 
1\leq \bigl|\{j\in[d] \mid i_j \neq0\}\bigr|+i\leq2
\right\}.
\]

A tensor product
containing a zero idempotent vanishes automatically.
Define $\varepsilon_j=1$ if $a_j\geq 2$, and $\varepsilon_j=0$ if $a_j= 1$. 
For each $k\in[d]$, define $\mathcal{I}_k=\{i \in\mathbb{Z} \mid 0 \leq i \leq \varepsilon_k  \}$.

\begin{theorem}
\label{DelsarteSudoku}
Let $H$ be an $(n,d,d-1,1)$ Latin hypercube, and let $\chi$ be the
characteristic vector defined above. Then $H$ is a Sudoku hypercube
if and only if
$C$ is a Delsarte $\mathcal{I}_1\times \cdots \times \mathcal{I}_d\times \{1\}$-design.
\end{theorem}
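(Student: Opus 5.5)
The plan is to translate the block condition into a statement about the vectors $N_s(y)$, the number of cells in block $y$ carrying symbol $s$, and then read off which projections detect it. Write $P_j=E_0^{(b_j)}\otimes I_{a_j}$. It averages over each prescribed group in coordinate $j$, and $P_j=F_0^{(j)}+F_1^{(j)}$. The index set $\mathcal I_j$ is $\{0,1\}$ when $a_j\ge2$ and $\{0\}$ when $a_j=1$. In the case $a_j=1$ we have $F_1^{(j)}=0$ and $F_0^{(j)}=P_j=n^{-1}J$, so in every case $\sum_{i_j\in\mathcal I_j}F_{i_j}^{(j)}=P_j$. Summing the design conditions over $\mathcal I_1\times\cdots\times\mathcal I_d$ shows that the $\mathcal I_1\times\cdots\times\mathcal I_d\times\{1\}$-design condition implies
\[
\bigl(P_1\otimes\cdots\otimes P_d\otimes E_1^{(N)}\bigr)\chi=0 .
\]
Conversely, the individual products $F_{i_1}^{(1)}\otimes\cdots\otimes F_{i_d}^{(d)}\otimes E_1^{(N)}$ with $i_j\in\mathcal I_j$ are mutually orthogonal projections whose sum is this single operator. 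So the design condition is equivalent to this one operator killing $\chi$. The converse direction needs orthogonality: applying each summand projection to a vector in the kernel of the sum gives zero.

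Next I compute $(P_1\otimes\cdots\otimes P_d\otimes I_N)\chi$ directly. Its value at $(x_1,\dots,x_d,s)$ is $\bigl(\prod_j b_j\bigr)^{-1}$ times the number of cells in the block containing $x$ that carry symbol $s$. Since $\prod_j b_j=N$, this value is $N^{-1}N_s(y)$, where $y$ is that block. Applying $E_1^{(N)}=I-N^{-1}J$ in the last coordinate subtracts the mean over $s$. That mean is $N^{-1}\cdot N^{-1}\cdot|\text{block}|=N^{-1}\cdot N^{-1}\cdot N=N^{-1}$. Hence the vector vanishes exactly when $N_s(y)=1$ for all $s$ and all blocks $y$, which is precisely the Sudoku condition.

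The only delicate points are bookkeeping. First, I must check that the identification $[n]=[b_j]\times[a_j]$ makes $E_0^{(b_j)}\otimes I_{a_j}$ the within-group averaging operator. Second, I must handle the degenerate cases $a_j=1$ and $b_j=1$ against the relabeling convention. When $b_j=1$, $P_j=I$ and $F_1^{(j)}=E_1^{(n)}$ after relabeling, and $\mathcal I_j=\{0,1\}$ still gives the correct sum. I expect this case check to be the main, though minor, obstacle. The Latin hypothesis is not actually needed for the equivalence, but it does no harm.
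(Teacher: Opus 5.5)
Your proposal is correct and follows essentially the same route as the paper. Like the paper, you collapse the mutually orthogonal projections indexed by $\mathcal I_1\times\cdots\times\mathcal I_d$ into the single operator $\bigotimes_{j}(E_0^{(b_j)}\otimes I_{a_j})\otimes E_1^{(N)}$ and read off that it kills $\chi$ exactly when every block contains every symbol once; the paper splits along the orthogonal block indicator vectors where you evaluate pointwise, which amounts to the same computation. One small precision: the Latin hypothesis is indeed not needed for ``design $\Leftrightarrow$ block condition'', but it is needed for the stated conclusion, because the definition of a Sudoku hypercube presupposes the Latin property.
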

\begin{proof}
For $c=(c_1,\dots,c_d)\in \mathcal{I}_1\times \cdots \times \mathcal{I}_d$, let
\[
P_c=
F_{c_1}^{(1)}\otimes\cdots\otimes F_{c_d}^{(d)}
\otimes E_1^{(N)}.
\]
The nonzero $P_c$ are mutually orthogonal projections. Hence
$P_c\chi=0$ for $c\in \mathcal{I}_1\times \cdots \times \mathcal{I}_d$ if and only if
\[
\left(
\bigotimes_{j\in[d]}
\left(\sum_{k=0}^{\varepsilon_j}F_k^{(j)}\right)
\otimes E_1^{(N)}
\right)\chi=0.
\]
Since
\[
\sum_{k=0}^{\varepsilon_j}F_k^{(j)}
=
E_0^{(b_j)}\otimes I_{a_j}
=
\frac{1}{b_j}J_{b_j}\otimes I_{a_j},
\]
this is equivalent to
\[
\left(
\bigotimes_{j\in[d]}
(J_{b_j}\otimes I_{a_j})
\otimes E_1^{(N)}
\right)\chi=0.
\]

For $s=(s_1,\dots,s_d)\in[a_1]\times\cdots\times[a_d]$, let
$y_s\in\mathbb R^N$ be the symbol-count vector of the corresponding
Cartesian block. Expanding the preceding equation and collecting the
terms belonging to the same block gives
\[
\sum_{s\in[a_1]\times\cdots\times[a_d]}
\left(
\bigotimes_{j\in[d]}
(\mathbf 1_{b_j}\otimes e_{s_j}^{(a_j)})
\right)
\otimes E_1^{(N)}y_s=0.
\]
The vectors
\[
\bigotimes_{j\in[d]}
(\mathbf 1_{b_j}\otimes e_{s_j}^{(a_j)}),
\qquad
s\in[a_1]\times\cdots\times[a_d],
\]
are nonzero and mutually orthogonal. Hence the preceding equation
holds if and only if
\[
E_1^{(N)}y_s=0
\qquad
\text{for }s\in[a_1]\times\cdots\times[a_d].
\]

Since
\[
E_1^{(N)}=I_N-\frac1N J_N,
\]
we have $\ker E_1^{(N)}=\langle\mathbf 1_N\rangle$. Thus
$E_1^{(N)}y_s=0$ if and only if $y_s=\alpha_s\mathbf 1_N$ for some
scalar $\alpha_s$. The sum of the coordinates of $y_s$ is the number
of cells in the corresponding block, namely
\[
\prod_{j\in[d]}b_j=N.
\]
Hence $\alpha_s=1$, so $y_s=\mathbf 1_N$. Therefore each symbol
occurs exactly once in each prescribed Cartesian block. Since $H$ is
already an $(n,d,d-1,1)$ Latin hypercube, this is precisely the Sudoku
condition.
\end{proof}

Since $H$ is already a Latin hypercube, the conditions in
Theorem~\ref{DelsarteSudoku} for which at most one of
$c_1,\dots,c_d$ is nonzero are automatic. Thus the additional Sudoku
conditions are precisely those for which at least two of
$c_1,\dots,c_d$ are nonzero.

\subsection{Orthogonality}

For results on orthogonal Latin hypercubes of prime-power order, see
\cite{MR2860603}. For $(n,d,m,t)$ Latin hypercubes with $d\ge2m$,
two hypercubes are \emph{orthogonal} if, when superimposed, each
ordered pair of symbols occurs exactly $n^{d-2m}$ times; see
\cite{MR2860603}. Huggan, Mullen, Stevens, and Thomson
\cite{MR3600882} studied orthogonality for several classes of
Sudoku-like hypercubes. In particular, when $a$ is a prime power,
they constructed $(a^d-a)(a^d-a^2)\cdots(a^d-a^{d-1})$ 
mutually orthogonal hypercubes of order $a^d$ with subcubes of side
length $a$ in each coordinate.

Let $L$ and $M$ be Latin hypercubes on symbol sets $S$ and $T$,
respectively. We say that $L$ and $M$ are \emph{injectively
orthogonal} if no ordered pair $(s,t)\in S\times T$ occurs in more
than one cell when $L$ and $M$ are superimposed. We will usually say
simply that $L$ and $M$ are \emph{orthogonal}.

When $d<2m$, we use injective orthogonality for $(n,d,m,t)$ Latin
hypercubes. This is the natural analogue of the usual definition,
since $n^{d-2m}<1$ and hence equal positive multiplicities of all
ordered pairs are impossible. When $d=2m$, the two definitions agree.

Figure~\ref{OrthoPair} superimposes two orthogonal
$(4,3,2,1)$ Sudoku hypercubes with block size $(4,2,2)$. Each
cell records the two corresponding symbols, with the symbol from the
first cube written first.

\begin{figure}[htbp]
\centering
\includegraphics[width=\textwidth]{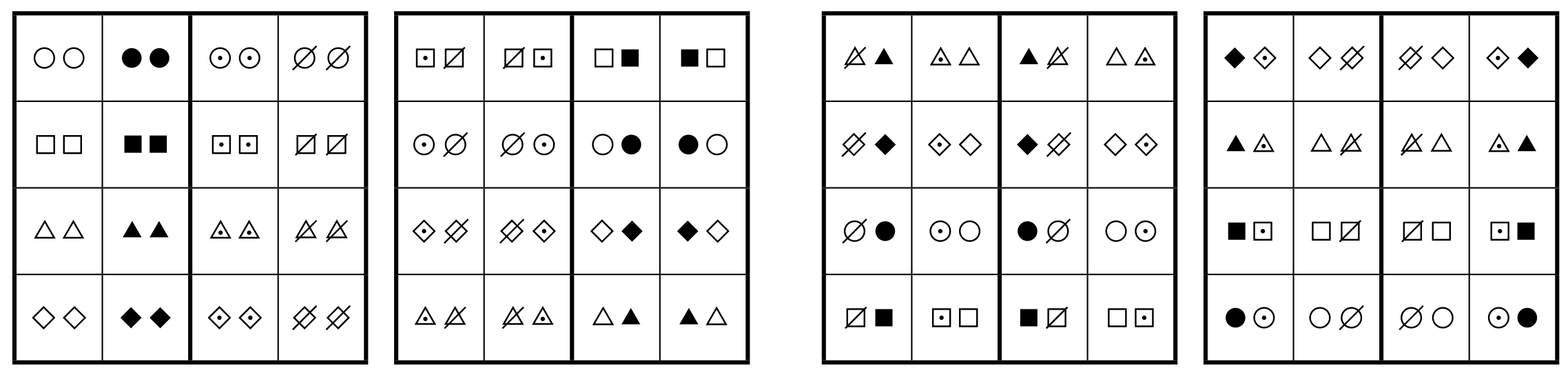}
\caption{The superposition of two orthogonal $(4,3,2,1)$ Sudoku hypercubes
with block size $(4,2,2)$. Heavy vertical lines and the larger
gap between pairs of layers indicate the block boundaries.}
\label{OrthoPair}
\end{figure}

The same block structure admits a complete set of six mutually
orthogonal $(4,3,2,1)$ Sudoku hypercubes; applying
Lemma~\ref{BlockBound} after permuting the coordinates
gives the corresponding upper bound. Figure~\ref{CompleteFam} displays their
superposition in the same four-layer format.

\begin{figure}[htbp]
\centering
\includegraphics[width=\textwidth]{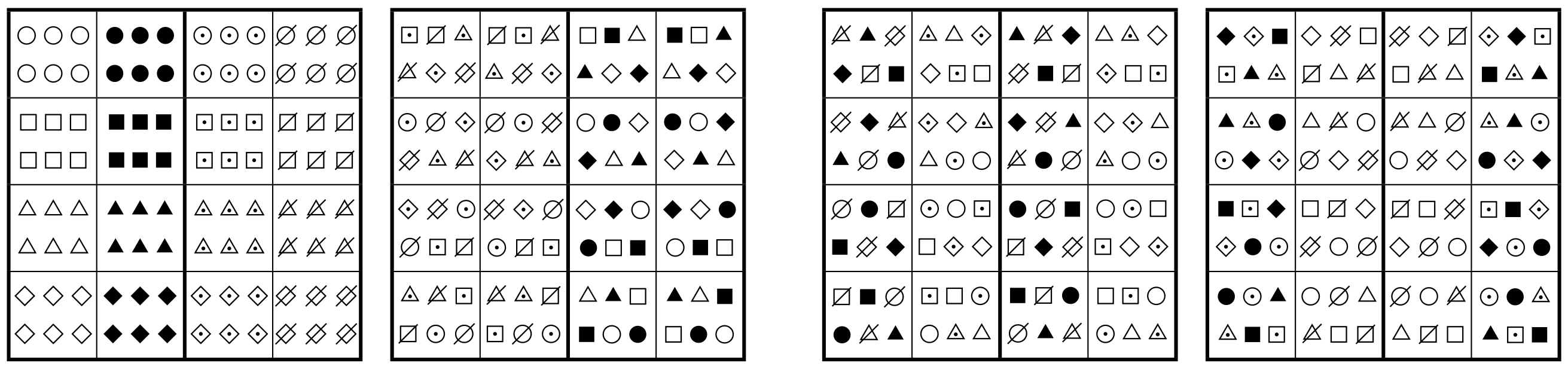}
\caption{The superposition of a complete set of six mutually
orthogonal $(4,3,2,1)$ Sudoku hypercubes with block size
$(4,2,2)$. Each cell contains the six corresponding symbols. Heavy
vertical lines and the larger gap between pairs of layers indicate
the block boundaries.}
\label{CompleteFam}
\end{figure}

In Section~\ref{FiniteField}, we use finite fields to
construct mutually orthogonal $(q^3,3,2,1)$ Sudoku hypercubes for
three natural block structures. For block sizes $(q^3,q^2,q)$ and
$(q^3,q^3,1)$, the resulting family sizes are best possible. For
block size $(q^2,q^2,q^2)$, we construct a family of size
$q^2(q^2-1)(q^2-q)$, which is asymptotically best possible as
$q\to\infty$. 

These constructions also have a hypergraph interpretation. Two $1$-factorizations of a hypergraph are \emph{orthogonal} if every
factor of one and every factor of the other have at most one edge in
common.
This notion is classical for graphs and has also been studied for
complete multipartite graphs; see, for example,
\cite{MR3942272,MR785657}. Under the correspondence between
hypergraph colorings and Latin hypercubes, the symbols of an
$(n,d,d-1,1)$ Latin hypercube correspond to the factors of a
$1$-factorization of $K_{n\times d}^d$. Hence two such Latin
hypercubes are orthogonal precisely when the corresponding
$1$-factorizations are orthogonal. Thus our finite-field
constructions yield mutually orthogonal block $1$-factorizations of
$K_{q^3\times3}^3$.

\section{Block Decompositions of Multipartite Hypergraphs}

Let $G=(V,E)$ be a hypergraph. We say that $G$ is
\emph{$d$-uniform} if $|e|=d$ for $e\in E$, and
\emph{$d$-partite} if its vertex set has a partition
$V=V_1\cup\cdots\cup V_d$ such that
$|e\cap V_i|=1$ for $e\in E$ and $i\in[d]$.
A subset $e\subseteq V$ is \emph{transversal} if
$|e\cap V_i|=1$ for $i\in[d]$. Suppose further that
$V_i=\bigcup_{s\in[a_i]}V_i^s$ is a partition for $i\in[d]$.
With respect to these partitions, a \emph{block} is the subhypergraph
of $G$ induced by choosing groups $V_i^{s_i}$ for $i\in[d]$.

Recall that $\mult_G(e)$ denotes the multiplicity of a subset
$e\subseteq V$ in $G$. We let $I(G)$ be the incidence bipartite graph
of $G$, with parts $V$ and the edge occurrences of $G$, where a
vertex $v\in V$ is adjacent to an edge occurrence if that edge
contains $v$. The edges of $I(G)$ are called the \emph{flags} of
$G$. For $v\in V$, we let $\FF_G(v)$ be the set of flags incident
with $v$, and for $e\subseteq V$ we let $\FF_G(v,e)$ be the set of
flags joining $v$ to occurrences of $e$. Thus
$|\FF_G(v,e)|=\mult_G(e)$ whenever $v\in e$. When the context is
clear, we write $\mult(e)$ and $\FF(v)$ for $\mult_G(e)$ and
$\FF_G(v)$, respectively. For $j\in[k]$, we also write
$\dg_j(v)=\deg_{G(j)}(v)$, $\mult_j(e)=\mult_{G(j)}(e)$,
$\FF_j(v)=\FF_{G(j)}(v)$, and
$\FF_j(v,e)=\FF_{G(j)}(v,e)$.

We will repeatedly use the following elementary properties of
$\approx$. If $x\approx y$, then $x/q\approx y/q$ for positive
integers $q$. If $x,y\in\mathbb Z$ and $x\approx y$, then $x=y$.
The relation $\approx$ is transitive. Finally, if $a=b-c$ and
$c\approx x$, where $a,b,c\in\mathbb Z$, then $a\approx b-x$.

A family of sets is \emph{laminar} if any two of its members are
either disjoint or one contains the other. We use the following lemma
of Nash--Williams.

\begin{lemma}\textup{(Nash--Williams \cite[Lemma~2]{MR0916377})}
\label{NW}
For a positive integer $q$ and two laminar families
$\mathscr A$ and $\mathscr B$ of subsets of a finite set $S$, there
exists $F\subseteq S$ such that
\[
|F\cap P|\approx\frac{|P|}{q}
\qquad\text{for }P\in\mathscr A\cup\mathscr B.
\]
\end{lemma}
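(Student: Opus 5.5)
The plan is to prove this by the integrality theorem for network flows with lower and upper capacities. The two laminar families make the constraints "$|F\cap P|\approx|P|/q$" into bounds on the arcs of a network, and a fractional feasible flow is evident.

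\emph{Step 1: reduce to rooted forests.} Without loss of generality, add $S$ itself and all singletons $\{s\}$, $s\in S$, to both $\mathscr A$ and $\mathscr B$. This only adds constraints, and for a singleton the constraint $|F\cap\{s\}|\approx 1/q$ is automatically satisfied by any $F$ when $q\ge 2$. (For $q=1$, take $F=S$.) A laminar family containing $S$ and all singletons is a rooted tree under inclusion: the parent of $P$ is the minimal member strictly containing $P$, and the leaves are the singletons. Identical sets occurring in both families are treated as separate nodes, and duplicates within one family are merged.

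\emph{Step 2: build the network.} Take a source $\sigma$ and a sink $\tau$. For each $P\in\mathscr A$, create a node $P^{\mathscr A}$; for each $Q\in\mathscr B$, create a node $Q^{\mathscr B}$. Add the following arcs:
\begin{itemize}
\item $\sigma\to S^{\mathscr A}$;
\item parent$^{\mathscr A}\to$ child$^{\mathscr A}$ in the $\mathscr A$-tree;
\item $\{s\}^{\mathscr A}\to\{s\}^{\mathscr B}$ for each $s\in S$;
\item child$^{\mathscr B}\to$ parent$^{\mathscr B}$ in the $\mathscr B$-tree;
\item $S^{\mathscr B}\to\tau$.
\end{itemize}
The arc entering $P^{\mathscr A}$, or leaving $Q^{\mathscr B}$, gets lower capacity $\lfloor |P|/q\rfloor$ and upper capacity $\lceil |P|/q\rceil$ (respectively with $|Q|$). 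The arc $\{s\}^{\mathscr A}\to\{s\}^{\mathscr B}$ gets bounds $[0,1]$.

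\emph{Step 3: exhibit a fractional flow and round.} Send flow $|P|/q$ along each tree arc into $P^{\mathscr A}$ or out of $Q^{\mathscr B}$, and flow $1/q$ on each middle arc. Conservation holds because the children of a node partition it: in a laminar tree the leaves below $P$ are exactly the elements of $P$. All bounds are met, so a feasible flow exists. Since every capacity is an integer, the integral flow theorem, applied to the circulation with integer lower and upper bounds after adding an arc $\tau\to\sigma$, yields an integral feasible flow $f$.

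\emph{Step 4: read off $F$.} Let $F=\{s: f(\{s\}^{\mathscr A}\to\{s\}^{\mathscr B})=1\}$. By conservation down the $\mathscr A$-tree, the flow into $P^{\mathscr A}$ equals the total flow on the middle arcs of $P$, which is $|F\cap P|$. Hence $\lfloor|P|/q\rfloor\le|F\cap P|\le\lceil|P|/q\rceil$, and the same argument up the $\mathscr B$-tree handles $\mathscr B$.

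The only delicate point is Step 1's tree structure, namely checking that conservation at each internal node matches the sets. This is where laminarity is used. Two arbitrary families would not give a network, only a general integer program without the total-unimodularity this construction provides.
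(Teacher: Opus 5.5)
The paper does not prove this lemma; it quotes it from Nash--Williams \cite[Lemma~2]{MR0916377}. Your argument is a correct, self-contained proof by a standard route, the integral circulation theorem:
\begin{itemize}
\item after adding $S$ and all singletons, each laminar family becomes a rooted tree in which the leaves below $P$ are exactly the elements of $P$;
\item orienting the $\mathscr A$-tree downward and the $\mathscr B$-tree upward, and joining them at the leaves, gives a network in which each constraint $|F\cap P|\approx|P|/q$ is the bound on a single arc;
\item the fractional flow of value $|P|/q$ is feasible, so integrality yields a $0/1$ flow on the middle arcs, which is exactly the set $F$.
\end{itemize}

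Compared with the bare citation, your route buys two things. First, it makes visible why exactly two laminar families are allowed: the constraint matrix is a network matrix, hence totally unimodular. With three families the statement fails. For example, take the one-member families $\{\{1,2\}\}$, $\{\{2,3\}\}$, $\{\{1,3\}\}$ with $q=2$; these would force $2|F|=3$. Second, it proves more at no extra cost. The same network rounds arbitrary weights $x\in[0,1]^S$ to a set $F$ with $|F\cap P|\approx\sum_{s\in P}x_s$ for every $P\in\mathscr A\cup\mathscr B$. The paper only needs the uniform case $x\equiv 1/q$.

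One small point to tidy up. Discard $\emptyset$ from both families before forming the trees. Its constraint is vacuous, but it has no unique minimal strict superset, so your parent map is not well defined for it. This is worth a sentence, because empty members do occur in the paper's applications. Examples are $\FF_j(x_1)$ when $\dg_j(x_1)=0$, and $\FF_j(x_1,S)$ when $\mult(S)>0$ but $\mult_j(S)=0$.
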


We prove Theorem~\ref{Main} by splitting vertices in two
stages. Starting with one vertex in each part, the first stage splits
the $i$th part into $a_i$ vertices and establishes the required
balance on the prescribed blocks. The second stage splits these
vertices further into $b_i=n/a_i$ vertices while preserving the
number of edges of each color in every block. At each step, the
Nash--Williams laminar rounding lemma distributes the incident flags
as evenly as possible while controlling colored degrees and edge
multiplicities.

\subsection{Initial Splitting}

An \emph{$(\ell,d,k)$-graph} is a $k$-colored $d$-uniform
$d$-partite hypergraph on $\ell$ vertices.

\begin{lemma}
\label{FirstSplit}
Let $n,d,k,\ell,\lambda,a_1,\dots,a_d,m_1,\dots,m_k$ be positive
integers such that
\[
n=\prod_{i\in[d]}a_i,\qquad
\sum_{j\in[k]}m_j=\lambda n^d,\qquad
d\le\ell\le\sum_{i\in[d]}a_i.
\]
There exists an $(\ell,d,k)$-graph
$G=(V_1\cup\cdots\cup V_d,E)$ such that there is
$x_i\in V_i$ with $|V_i|\le a_i$ for $i\in[d]$, and
\[
|E(G(j))|=m_j,\qquad
\mult(e)=\lambda n^{d-1}\prod_{v\in e}g(v),\qquad
\frac{\dg_j(u)}{g(u)}\approx\frac{m_j}{a_i},\qquad
\frac{\mult_j(e)}{\prod_{v\in e}g(v)}\approx\frac{m_j}{n},
\]
for $i\in[d]$, $u\in V_i$, $j\in[k]$, and transversal
$e\subseteq V$, where
\[
g(x_i)=a_i-|V_i|+1\quad\text{for }i\in[d],
\qquad
g(u)=1\quad\text{otherwise}.
\]
\end{lemma}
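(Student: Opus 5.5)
Induction on $\ell$, splitting one vertex at a time and using Lemma~\ref{NW} to distribute flags.

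\emph{Base case $\ell=d$.} Each $V_i=\{x_i\}$, so $g(x_i)=a_i$. The only transversal is $e_0=\{x_1,\dots,x_d\}$, which we give multiplicity $\lambda n^{d-1}\prod_i a_i=\lambda n^d$. Color exactly $m_j$ of its occurrences with color $j$. Then $\dg_j(x_i)=m_j$, so $\dg_j(x_i)/g(x_i)=m_j/a_i$. Also $\mult_j(e_0)/\prod g=m_j/n$. All conditions hold exactly.

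\emph{Inductive step.} Suppose $G$ satisfies the lemma for $\ell<\sum_i a_i$. Choose $i$ with $|V_i|<a_i$, so $g(x_i)\ge 2$, and set $q=g(x_i)$. Split off a new vertex $u$ from $x_i$ as follows. Let $S=\FF(x_i)$. Use two laminar families:
\[
\mathscr A=\{\FF_j(x_i):j\in[k]\}\cup\{S\},\qquad
\mathscr B=\{\FF(x_i,e):e\ni x_i\text{ transversal}\}\cup\{\FF_j(x_i,e):j\in[k],\ e\ni x_i\}.
\]
$\mathscr A$ is a partition of $S$ plus $S$ itself. Each set $\FF(x_i,e)$ is partitioned by the sets $\FF_j(x_i,e)$, and distinct $e$ give disjoint sets, so $\mathscr B$ is laminar.

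Lemma~\ref{NW} gives $F\subseteq S$ with $|F\cap P|\approx |P|/q$ for every $P\in\mathscr A\cup\mathscr B$. Detach the flags in $F$ from $x_i$ and attach them to $u$; each edge occurrence keeps its color. In the new graph $g'(u)=1$ and $g'(x_i)=q-1$, and $G'$ has $\ell+1$ vertices.

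\emph{Verifying the conditions for $G'$.}
\begin{itemize}
\item \emph{Multiplicities.} For transversal $e\ni x_i$, $|\FF(x_i,e)|=\lambda n^{d-1}q\prod_{v\in e\setminus x_i}g(v)$, which is divisible by $q$. Hence the new edge $e'=e-x_i+u$ gets exactly $\lambda n^{d-1}\prod_{v\in e\setminus x_i}g(v)$ copies, matching the formula with $g'(u)=1$. What remains on $e$ is $\lambda n^{d-1}(q-1)\prod_{v\in e\setminus x_i}g(v)$, again matching the formula.
\item \emph{Colored multiplicities.} By induction $\mult_j(e)\approx (m_j/n)\,q\prod'$, where $\prod'=\prod_{v\in e\setminus x_i}g(v)$. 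Dividing by $q$ gives $\mult_j(e)/q\approx (m_j/n)\prod'$. Transitivity with $|F\cap\FF_j(x_i,e)|\approx \mult_j(e)/q$ gives the bound for $e'$. For the remainder $\mult_j(e)-|F\cap\FF_j(x_i,e)|$, use the subtraction property of $\approx$ with $\mult_j(e)/q$.
\item \emph{Degrees.} The same two-step argument on $\FF_j(x_i)$ handles degrees. It gives $\dg'_j(u)\approx m_j/a_i$ and $\dg'_j(x_i)/(q-1)\approx m_j/a_i$.
\item \emph{Unaffected quantities.} Degrees and multiplicities of other vertices and of edges not containing $x_i$ do not change. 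Color class sizes do not change either.
\end{itemize}

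\emph{Main obstacle.} The delicate point is the remainder estimates at $x_i$: from $a\approx b-c$-type data one must conclude $(A-\lfloor A/q\rfloor\text{ or }\lceil\cdot\rceil)/(q-1)\approx y$. The plan is to handle this with a floor/ceiling check. Write $A\approx qy$, where $A$ is an integer. Then $A-|F\cap P|$ lies between $A-\lceil A/q\rceil$ and $A-\lfloor A/q\rfloor$. Dividing by $q-1$, these bounds lie between $\lfloor y\rfloor$ and $\lceil y\rceil$. This is verified by writing $A=q\lfloor y\rfloor+r$ with $0\le r\le q$.
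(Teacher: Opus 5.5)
Your argument is the paper's proof: the same induction on $\ell$ with base case $\lambda n^dK_{1\times d}^d$, and the same laminar families. Adding $\FF(x_i)$ to $\mathscr A$ is harmless, and indexing $\mathscr B$ by transversals loses nothing since $G$ is $d$-partite. You also use the same exact rounding of $|\FF(x_i,e)|/q$ for total multiplicities and the same degree estimates.

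One step, however, does not follow as written. Put $\Pi=\prod_{v\in e\setminus\{x_i\}}g(v)$. The inductive hypothesis for colored multiplicities is the \emph{normalized} statement $\mult_j(e)/(q\Pi)\approx m_j/n$, i.e.\ $q\Pi\lfloor m_j/n\rfloor\le\mult_j(e)\le q\Pi\lceil m_j/n\rceil$. It does not give $\mult_j(e)\approx (m_j/n)q\Pi$, nor $\mult_j(e)/q\approx(m_j/n)\Pi$, which is what you use to treat $e'$. These stronger forms also cannot be carried along as an invariant. Take $m_j/n=1/2$, $q=5$, $\Pi=1$ and $\mult_j(e)=3\approx 5/2$. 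Then Lemma~\ref{NW} allows $|F\cap\FF_j(x_i,e)|=0$, leaving $3$ occurrences on $e$, whereas the strong form would force exactly $(m_j/n)(q-1)\Pi=2$.

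The repair is to stay normalized, as the paper does. Divide $|F\cap\FF_j(x_i,e)|\approx\mult_j(e)/q$ by the positive integer $\Pi$ and use transitivity; this gives $\mult'_j(e')/\Pi\approx\mult_j(e)/(q\Pi)\approx m_j/n$. For the remainder, the subtraction property gives $\mult'_j(e)\approx\mult_j(e)(q-1)/q$. Dividing by $(q-1)\Pi$ then gives $\mult'_j(e)/((q-1)\Pi)\approx\mult_j(e)/(q\Pi)\approx m_j/n$.

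Your ``main obstacle'' paragraph has the same slip: $A\approx qy$ should read $A/q\approx y$. Your parametrization $A=q\lfloor y\rfloor+r$ with $0\le r\le q$ already encodes the correct hypothesis, so the degree case is fine. The whole floor/ceiling check is just the chain $\frac{A-c}{q-1}\approx\frac{A-A/q}{q-1}=\frac{A}{q}\approx y$ for integers $c\approx A/q$.
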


\begin{proof}
We proceed by induction on $\ell$. For $\ell=d$, we let
$G=\lambda n^dK_{1\times d}^d$, with
$V(G)=\{x_1,\dots,x_d\}$, and color exactly $m_j$ edge occurrences
with color $j$ for $j\in[k]$. Since $g(x_i)=a_i$ for
$i\in[d]$ and $\prod_{i\in[d]}a_i=n$, we have
\[
|E(G(j))|=m_j,\qquad
\frac{\dg_j(x_i)}{g(x_i)}=\frac{m_j}{a_i},
\]
and
\[
\mult(\{x_1,\dots,x_d\})
=\lambda n^{d-1}\prod_{i\in[d]}g(x_i),\qquad
\frac{\mult_j(\{x_1,\dots,x_d\})}
{\prod_{i\in[d]}g(x_i)}
=\frac{m_j}{n}.
\]
Thus the result holds for $\ell=d$.

For the inductive step, suppose
$d\le\ell<\sum_{i\in[d]}a_i$. Without loss of generality,
$|V_1|<a_1$. We let
$q=g(x_1)=a_1-|V_1|+1\ge2$ and define
\[
\begin{aligned}
\mathscr A
&=\{\FF_j(x_1):j\in[k]\},\\
\mathscr B
&=\{\FF(x_1,S):S\subseteq V,\ x_1\in S,\ \mult(S)>0\}\\
&\quad\cup
\{\FF_j(x_1,S):S\subseteq V,\ x_1\in S,\ \mult(S)>0,\ j\in[k]\}.
\end{aligned}
\]
The members of $\mathscr A$ are pairwise disjoint. In
$\mathscr B$, flag sets belonging to distinct subsets $S$ are
disjoint, while $\FF_j(x_1,S)\subseteq\FF(x_1,S)$. Hence
$\mathscr A$ and $\mathscr B$ are laminar. By
Lemma~\ref{NW}, there exists $F\subseteq\FF(x_1)$ such that
\[
|F\cap P|\approx\frac{|P|}{q}
\qquad\text{for }P\in\mathscr A\cup\mathscr B.
\]

Add a new vertex $\alpha$ to $V_1$, transfer to $\alpha$ the flags
in $F$, and denote the resulting hypergraph by $G'$. We let
$g'(x_1)=q-1$, $g'(\alpha)=1$, and $g'(u)=g(u)$ otherwise.
Clearly $|E(G'(j))|=m_j$ for $j\in[k]$. Moreover,
\[
\begin{aligned}
\dg'_j(\alpha)
&=|F\cap\FF_j(x_1)|
\approx\frac{\dg_j(x_1)}q
=\frac{\dg_j(x_1)}{g(x_1)}
\approx\frac{m_j}{a_1},\\
\frac{\dg'_j(x_1)}{g'(x_1)}
&=\frac{\dg_j(x_1)-\dg'_j(\alpha)}{q-1}
\approx
\frac{\dg_j(x_1)-\dg_j(x_1)/q}{q-1}\\
&=\frac{\dg_j(x_1)}q
\approx\frac{m_j}{a_1}.
\end{aligned}
\]

Let $f=\{x_1\}\cup U$ be a transversal subset of $V$, and let
$e=\{\alpha\}\cup U$. By the inductive hypothesis,
$\mult(f)=\lambda n^{d-1}\prod_{v\in f}g(v)>0$, so
$\FF(x_1,f)\in\mathscr B$. Since
$\mult(f)/q=\lambda n^{d-1}\prod_{v\in U}g(v)$ is an integer, the
rounding is exact. Hence
\[
\begin{aligned}
\mult'(e)
&=|F\cap\FF(x_1,f)|
=\frac{\mult(f)}q
=\lambda n^{d-1}\prod_{v\in e}g'(v),\\
\mult'(f)
&=\mult(f)-\mult'(e)
=\lambda n^{d-1}(q-1)\prod_{v\in U}g(v)
=\lambda n^{d-1}\prod_{v\in f}g'(v).
\end{aligned}
\]

Similarly, for $j\in[k]$,
\[
\begin{aligned}
\frac{\mult'_j(e)}{\prod_{v\in e}g'(v)}
&=\frac{|F\cap\FF_j(x_1,f)|}{\prod_{v\in U}g(v)}
\approx
\frac{\mult_j(f)}{q\prod_{v\in U}g(v)}
=\frac{\mult_j(f)}{\prod_{v\in f}g(v)}
\approx\frac{m_j}{n},\\
\frac{\mult'_j(f)}{\prod_{v\in f}g'(v)}
&=\frac{\mult_j(f)-\mult'_j(e)}
{(q-1)\prod_{v\in U}g(v)}
\approx
\frac{\mult_j(f)-\mult_j(f)/q}
{(q-1)\prod_{v\in U}g(v)}\\
&=\frac{\mult_j(f)}{\prod_{v\in f}g(v)}
\approx\frac{m_j}{n}.
\end{aligned}
\]
The remaining degrees and multiplicities are unchanged, completing the
induction.
\end{proof}

\subsection{Block Refinement}

The following lemma completes the splitting construction; taking
$\ell=dn$ yields Theorem~\ref{Main}.

\begin{lemma}
\label{BlockRefine}
Let $n,d,k,\ell,\lambda,a_1,\dots,a_d,m_1,\dots,m_k$ be positive
integers such that
\[
n=\prod_{i\in[d]}a_i,\qquad
\sum_{j\in[k]}m_j=\lambda n^d,\qquad
\sum_{i\in[d]}a_i\le\ell\le dn,\qquad
b_i=\frac{n}{a_i}\quad\text{for }i\in[d].
\]
There exists an $(\ell,d,k)$-graph
$G=(V_1\cup\cdots\cup V_d,E)$, with partitions
\[
V_i=\bigcup_{s\in[a_i]}V_i^s\qquad\text{for }i\in[d],
\]
such that $x_i^s\in V_i^s$ for $i\in[d]$ and $s\in[a_i]$,
\[
a_i\le |V_i|\le n,\qquad
1\le |V_i^s|\le b_i,\qquad
|E(G(j))|=m_j,
\]
and
\[
\mult(e)=\lambda\prod_{v\in e}h(v),\qquad
\frac{\dg_j(u)}{h(u)}\approx\frac{m_j}{n},\qquad
\frac{\mult_j(e)}{\prod_{v\in e}h(v)}
\approx\frac{m_j}{n^d},
\]
for $i\in[d]$, $s\in[a_i]$, $u\in V_i^s$, $j\in[k]$, and
transversal $e\subseteq V$, where
\[
h(x_i^s)=b_i-|V_i^s|+1
\quad\text{for }i\in[d]\text{ and }s\in[a_i],\qquad
h(u)=1\quad\text{otherwise}.
\]
Moreover,
\[
|E(G(j))\cap E(B)|\approx\frac{m_j}{n}
\]
for $j\in[k]$ and blocks $B$.
\end{lemma}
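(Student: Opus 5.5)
Induction on $\ell$, starting from the output of Lemma~\ref{FirstSplit} with $\ell=\sum_{i\in[d]}a_i$, and splitting vertices as in that proof.

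\textbf{Base case.} At $\ell=\sum_{i\in[d]}a_i$ we have $|V_i|=a_i$, so every vertex has $g\equiv1$. Name the vertices of $V_i$ as $x_i^s$ for $s\in[a_i]$, and put $V_i^s=\{x_i^s\}$. Then $h(x_i^s)=b_i$, and for a transversal $e$ we get $\prod_{v\in e}h(v)=\prod_i b_i=n^{d-1}$. The conclusions of Lemma~\ref{FirstSplit} then become the required ones:
\begin{itemize}
\item $\mult(e)=\lambda n^{d-1}=\lambda\prod_{v\in e}h(v)$;
\item $\dg_j(u)/h(u)\approx (m_j/a_i)/b_i=m_j/n$, using the property that $\approx$ is preserved under division by a positive integer;
\item $\mult_j(e)/n^{d-1}\approx (m_j/n)/n^{d-1}=m_j/n^d$.
\end{itemize}
Each block at this stage is a single transversal $e$, so its $j$-count is $\mult_j(e)\approx m_j/n$ directly from Lemma~\ref{FirstSplit}.

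\textbf{Inductive step.} Suppose $\ell<dn$. Choose $i,s$ with $|V_i^s|<b_i$, and set $q=h(x_i^s)\ge2$. Apply Lemma~\ref{NW} to flags at $x_i^s$ with the families
\begin{align*}
\mathscr A&=\{\FF_j(x_i^s):j\in[k]\},\\
\mathscr B&=\{\FF(x_i^s,S)\}\cup\{\FF_j(x_i^s,S)\},
\end{align*}
where $S$ ranges over subsets with $x_i^s\in S$ and $\mult(S)>0$. Both are laminar exactly as in Lemma~\ref{FirstSplit}. Add a new vertex $\alpha$ to $V_i^s$ (so to $V_i$), move the flags in the resulting $F$ to $\alpha$, and set $h'(x_i^s)=q-1$, $h'(\alpha)=1$.

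The degree and multiplicity computations copy Lemma~\ref{FirstSplit} verbatim with $g$ replaced by $h$. The exact rounding for $\mult(e)$ works because $\mult(f)/q=\lambda\prod_{v\in U}h(v)$ is an integer. The bounds $a_i\le|V_i|\le n$ and $1\le|V_i^s|\le b_i$ are immediate.

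\textbf{Main obstacle: block counts.} The block balance is not preserved by the rounding and must be tracked separately. Splitting $x_i^s$ into $x_i^s$ and $\alpha$ inside the same group $V_i^s$ leaves the partition of edge occurrences into blocks unchanged, because every moved flag still lies in $V_i^s$. So the number of color-$j$ edges in each block is invariant under this step, and the base-case balance $\approx m_j/n$ persists all the way to $\ell=dn$.

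The one point to verify is that the Nash--Williams transfer only relocates flags within the group. That is true by construction, since $\alpha$ is placed in $V_i^s$ itself. Hence no extra laminar constraint for blocks is needed here: the block balance is established entirely in Lemma~\ref{FirstSplit} (its $\mult_j$ condition) and then inherited. Taking $\ell=dn$ forces $|V_i^s|=b_i$ and $h\equiv1$, which gives Theorem~\ref{Main}.
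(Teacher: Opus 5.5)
Your proposal is correct and follows the paper's proof essentially step for step. The base case is read off from Lemma~\ref{FirstSplit} with $|V_i|=a_i$, $g\equiv1$ and $h(x_i^s)=b_i$; the inductive step splits some $x_i^s$ with $|V_i^s|<b_i$ via Lemma~\ref{NW} on the same laminar families, and block counts are preserved because the new vertex $\alpha$ stays in $V_i^s$ (the two counting facts you leave implicit, that $|V_i|=a_i$ is forced and that some $|V_i^s|<b_i$ exists when $\ell<dn$, are equally brief in the paper).
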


\begin{proof}
We proceed by induction on $\ell$. For
$\ell=\sum_{i\in[d]}a_i$, apply Lemma~\ref{FirstSplit}.
Since $|V_i|\le a_i$ for $i\in[d]$, we have $|V_i|=a_i$. Write
\[
V_i=\{x_i^1,\dots,x_i^{a_i}\},
\qquad
V_i^s=\{x_i^s\},
\]
and let $h(x_i^s)=b_i$. Since $g\equiv1$,
Lemma~\ref{FirstSplit} gives
\[
|E(G(j))|=m_j,\qquad
\mult(e)=\lambda n^{d-1},\qquad
\dg_j(u)\approx\frac{m_j}{a_i},\qquad
\mult_j(e)\approx\frac{m_j}{n}.
\]
Since $b_i=n/a_i$ and $\prod_{i\in[d]}b_i=n^{d-1}$, it follows that
\[
\mult(e)=\lambda\prod_{v\in e}h(v),\qquad
\frac{\dg_j(u)}{h(u)}\approx\frac{m_j}{n},\qquad
\frac{\mult_j(e)}{\prod_{v\in e}h(v)}
\approx\frac{m_j}{n^d}.
\]
Finally, a block corresponds to a single transversal $e$ and therefore
contains $\mult_j(e)\approx m_j/n$ edges of color $j$.

For the inductive step, suppose
$\sum_{i\in[d]}a_i\le\ell<dn$. Some $V_i^s$ has size less than
$b_i$; without loss of generality, assume $|V_1^1|<b_1$. We let
$q=h(x_1^1)=b_1-|V_1^1|+1\ge2$ and let
\[
\begin{aligned}
\mathscr A
&=\{\FF_j(x_1^1):j\in[k]\},\\
\mathscr B
&=\{\FF(x_1^1,S):S\subseteq V,\ x_1^1\in S,\ \mult(S)>0\}\\
&\quad\cup
\{\FF_j(x_1^1,S):S\subseteq V,\ x_1^1\in S,\
\mult(S)>0,\ j\in[k]\}.
\end{aligned}
\]
The members of $\mathscr A$ are pairwise disjoint. In
$\mathscr B$, flag sets belonging to distinct subsets $S$ are
disjoint, while $\FF_j(x_1^1,S)\subseteq\FF(x_1^1,S)$. Hence
$\mathscr A$ and $\mathscr B$ are laminar. By
Lemma~\ref{NW}, there exists $F\subseteq\FF(x_1^1)$ such that
\[
|F\cap P|\approx\frac{|P|}{q}
\qquad\text{for }P\in\mathscr A\cup\mathscr B.
\]

Add a new vertex $\alpha$ to $V_1^1$, transfer to $\alpha$ the flags
in $F$, and denote the resulting hypergraph by $G'$. We let
$h'(x_1^1)=q-1$, $h'(\alpha)=1$, and $h'(u)=h(u)$ otherwise.
Then $|E(G'(j))|=m_j$ for $j\in[k]$. Since $x_1^1$ and $\alpha$
belong to the same group $V_1^1$, no edge changes blocks, so the
number of edges of each color in each block is unchanged.

For $j\in[k]$,
\[
\begin{aligned}
\dg'_j(\alpha)
&=|F\cap\FF_j(x_1^1)|
\approx\frac{\dg_j(x_1^1)}q
=\frac{\dg_j(x_1^1)}{h(x_1^1)}
\approx\frac{m_j}{n},\\
\frac{\dg'_j(x_1^1)}{h'(x_1^1)}
&=\frac{\dg_j(x_1^1)-\dg'_j(\alpha)}{q-1}
\approx\frac{\dg_j(x_1^1)-\dg_j(x_1^1)/q}{q-1}\\
&=\frac{\dg_j(x_1^1)}q
\approx\frac{m_j}{n}.
\end{aligned}
\]

Let $f=\{x_1^1\}\cup U$ be a transversal subset of $V$, and let
$e=\{\alpha\}\cup U$. By the inductive hypothesis,
$\mult(f)=\lambda\prod_{v\in f}h(v)>0$, so
$\FF(x_1^1,f)\in\mathscr B$. Since
$\mult(f)/q=\lambda\prod_{v\in U}h(v)$ is an integer, the rounding
is exact. Hence
\[
\begin{aligned}
\mult'(e)
&=|F\cap\FF(x_1^1,f)|
=\frac{\mult(f)}q
=\lambda\prod_{v\in e}h'(v),\\
\mult'(f)
&=\mult(f)-\mult'(e)
=\lambda(q-1)\prod_{v\in U}h(v)
=\lambda\prod_{v\in f}h'(v).
\end{aligned}
\]

Similarly, for $j\in[k]$,
\[
\begin{aligned}
\frac{\mult'_j(e)}{\prod_{v\in e}h'(v)}
&=\frac{|F\cap\FF_j(x_1^1,f)|}{\prod_{v\in U}h(v)}
\approx\frac{\mult_j(f)}{q\prod_{v\in U}h(v)}
=\frac{\mult_j(f)}{\prod_{v\in f}h(v)}
\approx\frac{m_j}{n^d},\\
\frac{\mult'_j(f)}{\prod_{v\in f}h'(v)}
&=\frac{\mult_j(f)-\mult'_j(e)}
{(q-1)\prod_{v\in U}h(v)}
\approx
\frac{\mult_j(f)-\mult_j(f)/q}
{(q-1)\prod_{v\in U}h(v)}\\
&=\frac{\mult_j(f)}{\prod_{v\in f}h(v)}
\approx\frac{m_j}{n^d}.
\end{aligned}
\]
The remaining degrees and multiplicities are unchanged, completing the
induction.
\end{proof}

\begin{proof}[Proof of Theorem~\ref{Main}]
Take $\ell=dn$ in Lemma~\ref{BlockRefine}. Since
$\sum_{i\in[d]}|V_i|=dn$ and $|V_i|\le n$ for $i\in[d]$, we have
$|V_i|=n$. Since $V_i$ is partitioned into $a_i$ groups of size at
most $b_i$, where $a_ib_i=n$, it follows that $|V_i^s|=b_i$ for
$i\in[d]$ and $s\in[a_i]$. Hence $h\equiv1$, so
$\mult(e)=\lambda$ for transversal $e$, and therefore
$G=\lambda K_{n\times d}^d$.

Lemma~\ref{BlockRefine} now gives
\[
\begin{aligned}
|E(G(j))|&=m_j,
&
\dg_j(v)&\approx\frac{m_j}{n},\\
|E(G(j))\cap E(B)|&\approx\frac{m_j}{n},
&
\mult_j(e)&\approx\frac{m_j}{n^d},
\end{aligned}
\]
for $j\in[k]$, vertices $v$, blocks $B$, and transversal edges $e$.

If $m_j\le n^d$, then $\mult_j(e)\in\{0,1\}$ for $e$, so $G(j)$ is
simple. Conversely, if $m_j>n^d$, then $G(j)$ has more than $n^d$
edge occurrences but $K_{n\times d}^d$ has only $n^d$ distinct
edges, so $G(j)$ is not simple.
\end{proof}

\section{Finite-Field Constructions of Orthogonal Sudoku Hypercubes}
\label{FiniteField}

We now construct mutually orthogonal Sudoku hypercubes of order
$q^3$. Recall that two such hypercubes are orthogonal if the ordered
pairs of symbols appearing in corresponding cells are all distinct.
A $(q^3,3,2,1)$ Latin hypercube has $q^6$ symbols. Partition the
positions in the three coordinates into groups of sizes
$b_1,b_2,b_3$, respectively, and let the blocks be the Cartesian
products of these groups. Since a block contains each symbol exactly
once, necessarily
\[
b_1b_2b_3=q^6.
\]
We call $(b_1,b_2,b_3)$ the block size. Throughout this section, all
hypercubes in a family use the same fixed block partition.

Here we restrict attention to the three-dimensional order-$q^3$
setting, where the finite-field constructions take a particularly
simple form.

We first give a general upper bound.

\begin{lemma}
\label{BlockBound}
Suppose $\mathcal L$ is a family of mutually orthogonal
$(n,3,2,1)$ Sudoku hypercubes for a fixed block partition
with block size $(b_1,b_2,b_3)$. If $b_1<n$, then
\[
|\mathcal L|\le b_1(b_2-1)(b_3-1).
\]
\end{lemma}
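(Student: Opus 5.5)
The plan is a direct counting argument. Fix one cell and look at where its symbol reappears inside a second, carefully chosen block. Orthogonality forces these reappearances to be distinct cells for distinct hypercubes, and the Latin and Sudoku conditions confine them to a region of size $b_1(b_2-1)(b_3-1)$.

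First, set up the objects. Write $a_i=n/b_i$ for the number of groups in coordinate $i$. Since $b_1<n$, we have $a_1\ge2$, so coordinate $1$ has at least two groups. Fix a cell $x=(x_1,x_2,x_3)$ and let $B$ be the block containing it. Choose a second block $B'$ that uses the same groups as $B$ in coordinates $2$ and $3$ but a different group in coordinate $1$; this is possible because $a_1\ge2$. Define
\[
R=\{y=(y_1,y_2,y_3)\in B' : y_2\ne x_2,\ y_3\ne x_3\}.
\]
Because $x_2$ and $x_3$ lie in the groups used by $B'$, we get $|R|=b_1(b_2-1)(b_3-1)$.

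Second, build an injection $\mathcal L\to R$. For $L\in\mathcal L$, the Sudoku condition says the symbol $L(x)$ occurs exactly once in $B'$, say at the cell $y_L$. Since $L$ is an $(n,3,2,1)$ Latin hypercube, every hyperplane contains each symbol exactly once. The hyperplanes $\{y_2=x_2\}$ and $\{y_3=x_3\}$ already contain $L(x)$ at $x$. Moreover $y_L\ne x$, since $x\notin B'$. Hence $y_L$ satisfies $y_2\ne x_2$ and $y_3\ne x_3$, so $y_L\in R$.

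Suppose $L\ne M$ in $\mathcal L$ but $y_L=y_M=y$. Then $L(y)=L(x)$ and $M(y)=M(x)$, so the ordered pair $(L(x),M(x))$ appears in both cells $x$ and $y$ of the superposition. This contradicts orthogonality. Hence $L\mapsto y_L$ is injective, and $|\mathcal L|\le|R|=b_1(b_2-1)(b_3-1)$.

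The only point that needs care is choosing $B'$ so that all three constraints apply together. $B'$ must be disjoint from $x$, which gives $y_L\ne x$. It must share the coordinate-$2$ and coordinate-$3$ groups with $x$, so that both hyperplane restrictions actually cut down $B'$. Finally, it needs a different coordinate-$1$ group, which is exactly where the hypothesis $b_1<n$ is used. Once $B'$ is chosen this way, the rest is immediate.
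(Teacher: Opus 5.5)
Your proof is correct and follows the paper's argument essentially verbatim: the paper also fixes a cell $x$, takes a block differing from the block of $x$ only in the first-coordinate group (using $b_1<n$), uses the Latin property to confine the reappearance of $L(x)$ to the $b_1(b_2-1)(b_3-1)$ cells of that block that differ from $x$ in every coordinate, and uses orthogonality to show each such cell serves at most one hypercube. Your injection $L\mapsto y_L$ is the same argument as the paper's count of the set of pairs $(L,y)$ with $L(y)=L(x)$, just phrased as a map.
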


\begin{proof}
Fix a cell $x$, and choose a block $B$ which differs from the block
containing $x$ only in the first coordinate. Let $X$ be the set of
cells of $B$ which differ from $x$ in all three coordinates. Then
$|X|=b_1(b_2-1)(b_3-1)$. We count
\[
S=\{(L,y)\mid L\in\mathcal L,\ y\in B,\ L(y)=L(x)\}.
\]
For $L\in\mathcal L$, the symbol $L(x)$ occurs exactly once in $B$,
so there is exactly one choice for $y$. Hence
$|S|=|\mathcal L|$. Moreover, such a $y$ belongs to $X$, since
otherwise $x$ and $y$ would be distinct cells in the same hyperplane
receiving the same symbol under $L$, contrary to the
$(n,3,2,1)$ Latin property.

For $y\in X$, there is at most one $L\in\mathcal L$ for which
$L(y)=L(x)$. Indeed, if this held for distinct
$L,L'\in\mathcal L$, then
\[
\bigl(L(y),L'(y)\bigr)=\bigl(L(x),L'(x)\bigr),
\]
contrary to orthogonality. Therefore
\[
|\mathcal L|=|S|\le |X|=b_1(b_2-1)(b_3-1).\qedhere\]
\end{proof}
\begin{remark}
One might wonder whether applying the linear programming method to the association scheme
$\mathfrak X=
(H(1,b_1)\wr H(1,a_1))\otimes\cdots\otimes
(H(1,b_d)\wr H(1,a_d))\otimes H(1,N)^{\otimes k}
$
could yield a better upper bound on the number $k$ of mutually orthogonal Sudoku hypercubes. However, the resulting bound coincides with that of Lemma~\ref{BlockBound}.
\end{remark}

When the entries of the block size are powers of $q$, write
$(a,b,c)=(q^r,q^s,q^t)$. Since $0\le r,s,t\le3$ and
$r+s+t=6$, the possibilities, up to permutation of the coordinates,
are
\[
(2,2,2),\qquad (3,2,1),\qquad (3,3,0).
\]

\begin{theorem}
\label{FFMain}
Let $q$ be a prime power. There exist the following families of
mutually orthogonal $(q^3,3,2,1)$ Sudoku hypercubes:
\begin{enumerate}
\item[\textup{(a)}] $q^2(q^2-1)(q^2-q)$ with block size
$(q^2,q^2,q^2)$;
\item[\textup{(b)}] $q^2(q-1)(q^3-1)$ with block size
$(q^3,q^2,q)$;
\item[\textup{(c)}] $(q^3-1)^2$ with block size
$(q^3,q^3,1)$.
\end{enumerate}
The family sizes in \textup{(b)} and \textup{(c)} are best possible.
\end{theorem}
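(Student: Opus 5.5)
We will use linear constructions over $\mathbb F_q$. Identify each coordinate set $[q^3]$ with $V=\mathbb F_{q^3}$, viewed as a $3$-dimensional $\mathbb F_q$-space, and the symbol set with $V^2$. Take the prescribed groups in coordinate $i$ to be the cosets of an $\mathbb F_q$-subspace $W_i\le V$ with $|W_i|=b_i$, so that the blocks are the cosets of $W_1\oplus W_2\oplus W_3$. Each hypercube will be a surjective $\mathbb F_q$-linear map $L:V^3\to V^2$, and it is determined up to relabeling symbols by its $3$-dimensional kernel $K$. We choose
\[
K=K_{\alpha,\beta}=\{(x,\alpha x,\beta x):x\in V\}\qquad\text{with }\alpha,\beta\in\mathbb F_{q^3}^*.
\]

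First we translate every condition into a statement about $K$. A cell pair receives the same symbol under $L$ exactly when the two cells differ by a vector of $K$, so each property becomes a trivial-intersection condition.
\begin{itemize}
\item The Latin condition for hyperplanes fixing coordinate $i$ means $K$ meets the coordinate subspace $\{v\in V^3: v_i=0\}$ trivially. For $K_{\alpha,\beta}$ this holds because $\alpha,\beta\ne0$.
\item The Sudoku condition is that $L$ is injective on each block, i.e.\ $K\cap(W_1\oplus W_2\oplus W_3)=0$. For $K_{\alpha,\beta}$ this reads $W_1\cap\alpha^{-1}W_2\cap\beta^{-1}W_3=0$.
\item Orthogonality of $L,L'$ means $(L,L')$ is injective, i.e.\ $K\cap K'=0$. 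For $(\alpha,\beta)\ne(\alpha',\beta')$ this holds: a nonzero $x$ in the intersection would need $(\alpha-\alpha')x=(\beta-\beta')x=0$, and a nonzero field element is invertible.
\end{itemize}
Hence every admissible pair $(\alpha,\beta)$ gives one member of the family, and it remains to count admissible pairs in each case.

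Case (c): take $W_1=W_2=V$ and $W_3=0$. The Sudoku condition is then automatic, since $\beta x=0$ forces $x=0$. This gives $(q^3-1)^2$ hypercubes.

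Case (b): take $W_1=V$, $\dim W_2=2$ and $W_3=\mathbb F_q w$. Solutions have the form $x=c\beta^{-1}w$, so the condition is $\gamma w\notin W_2$, where $\gamma=\alpha\beta^{-1}$. We may choose $\beta$ in $q^3-1$ ways and $\gamma$ in $q^3-q^2$ ways, giving $q^2(q-1)(q^3-1)$ hypercubes.

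Case (a): take $W_1=W_2=W_3=W$ with $\dim W=2$. This is the step needing the most care. Since $[\mathbb F_{q^3}:\mathbb F_q]=3$, there is no intermediate field, so $\alpha W=W$ only when $\alpha\in\mathbb F_q^*$. In that case the condition becomes $W\cap\beta^{-1}W=0$, which is impossible for two $2$-dimensional subspaces of a $3$-dimensional space. Now suppose $\alpha\notin\mathbb F_q$. Then $W\cap\alpha^{-1}W$ is a line $\mathbb F_q u$, and the condition becomes $\beta u\notin W$, which allows $q^3-q^2$ choices of $\beta$. Multiplying by the $q^3-q$ choices of $\alpha$ gives
\[
(q^3-q)(q^3-q^2)=q^2(q^2-1)(q^2-q),
\]
as required.

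Optimality follows from Lemma~\ref{BlockBound} after permuting coordinates. The lemma's proof uses only the Latin, block and orthogonality properties, which are symmetric in the coordinates, so we may move any coordinate with $b_i<n$ into the first position.
\begin{itemize}
\item For (b), put the coordinate with $b=q^2$ first. The bound is $q^2(q^3-1)(q-1)$, which matches the construction.
\item For (c), put the coordinate with $b=1$ first. The bound is $1\cdot(q^3-1)(q^3-1)$, which again matches.
\end{itemize}
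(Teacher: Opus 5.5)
Your proof is correct and follows the paper's approach. Your kernels $\{(x,\alpha x,\beta x)\}$ give exactly the paper's family $L_{u,v}(x_1,x_2,x_3)=(x_1+ux_3,x_2+vx_3)$ under $u=-\beta^{-1}$, $v=-\alpha\beta^{-1}$, and your Latin, block, orthogonality, counting and optimality arguments (via Lemma~\ref{BlockBound} after permuting coordinates) match the paper's; the only real difference is in case (a), where you replace the paper's explicit $2\times2$ determinant computation, done in the basis of powers of a generator of $\mathbb F_{q^3}$, with the coordinate-free observation that $W\cap\alpha^{-1}W$ is a line when $\alpha\notin\mathbb F_q$, which also shows the count does not depend on which $2$-dimensional subspace $W$ is chosen.
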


\begin{proof}
Let $K=\mathbb F_{q^3}$, and identify $\mathbb F_q$ with the
subfield of $K$ having $q$ elements. Then $[K:\mathbb F_q]=3$.
Choose $\alpha\in K\setminus\mathbb F_q$. By the tower law for field
extensions,
\[
3=[K:\mathbb F_q]
=[K:\mathbb F_q(\alpha)]
[\mathbb F_q(\alpha):\mathbb F_q].
\]
Since $\alpha\notin\mathbb F_q$, we have
$[\mathbb F_q(\alpha):\mathbb F_q]>1$, and hence this degree is $3$.
Thus $\mathbb F_q(\alpha)=K$ and $1,\alpha,\alpha^2$ is a basis of
$K$ over $\mathbb F_q$. Consequently, an element of $K$ can be
written uniquely as $z_0+z_1\alpha+z_2\alpha^2$ with
$z_0,z_1,z_2\in\mathbb F_q$.

Write the minimal polynomial of $\alpha$ over $\mathbb F_q$ as
\[
x^3+c_2x^2+c_1x+c_0,
\qquad c_0,c_1,c_2\in\mathbb F_q.
\]
Since $\alpha$ is a root,
\[
\alpha^3=-c_2\alpha^2-c_1\alpha-c_0.
\]
We let
\[
U_0=\{0\},\qquad
U_1=\mathbb F_q,\qquad
U_2=\{z_0+z_1\alpha:z_0,z_1\in\mathbb F_q\},\qquad
U_3=K.
\]
These are nested $\mathbb F_q$-subspaces of $K$ of dimensions
$0,1,2,3$, respectively, and hence $|U_i|=q^i$.

For block size $(q^r,q^s,q^t)$, partition the first, second, and
third coordinate sets into the additive cosets of
$U_r,U_s,U_t$, respectively. These groups have the required sizes.

For $u,v\in K$, define
\[
L_{u,v}(x_1,x_2,x_3)
=(x_1+ux_3,x_2+vx_3),
\]
where $x_1,x_2,x_3\in K$, and use the elements of $K^2$ as the
$q^6$ symbols.

\medskip
\noindent\emph{Latin property.}\par
\noindent
If $x_3$ is fixed, then
\[
(x_1,x_2)\longmapsto(x_1+ux_3,x_2+vx_3)
\]
is a bijection from $K^2$ to $K^2$. If $x_2$ is fixed, then the
second coordinate of the symbol, $x_2+vx_3$, determines $x_3$
uniquely if and only if $v\ne0$; once $x_3$ is known, the first
coordinate determines $x_1$. Thus a hyperplane obtained by fixing
$x_2$ contains each symbol exactly once if and only if $v\ne0$.
Similarly, fixing $x_1$ gives each symbol exactly once if and only if
$u\ne0$. Hence $L_{u,v}$ is Latin if and only if $u\ne0$ and
$v\ne0$.

\medskip
\noindent\emph{Orthogonality.}\par
\noindent
Suppose $(u,v)\ne(u',v')$, and suppose two cells
$(x_1,x_2,x_3)$ and $(x'_1,x'_2,x'_3)$ receive the same ordered
pair of symbols under $L_{u,v}$ and $L_{u',v'}$. We let
$\delta_i=x_i-x'_i$. Then
\[
\delta_1+u\delta_3=\delta_2+v\delta_3=0,
\qquad
\delta_1+u'\delta_3=\delta_2+v'\delta_3=0.
\]
Subtracting gives
$(u-u')\delta_3=(v-v')\delta_3=0$. Since
$(u,v)\ne(u',v')$, at least one of $u-u'$ and $v-v'$ is nonzero,
so $\delta_3=0$. Hence $\delta_1=\delta_2=0$, and the two cells are
equal. Thus distinct hypercubes $L_{u,v}$ are orthogonal.

\medskip
\noindent\emph{Block condition.}\par
\noindent
A block is a translate of $U_r\times U_s\times U_t$, where
$r+s+t=6$. Since a block and the symbol set $K^2$ both have $q^6$
elements, the Sudoku condition is equivalent to the restriction of
$L_{u,v}$ to a block being injective. Since $L_{u,v}$ is linear, two
cells in the same block receive the same symbol precisely when their
difference $(y_1,y_2,y_3)\in U_r\times U_s\times U_t$ lies in the
kernel of $L_{u,v}$. Thus
\[
y_1=-uy_3,\qquad y_2=-vy_3.
\]
If $y_3=0$, then $y_1=y_2=0$. Hence the block condition holds if and
only if there is no nonzero $y_3\in U_t$ such that
\[
uy_3\in U_r
\qquad\text{and}\qquad
vy_3\in U_s.
\]

\begin{enumerate}

\item[\textup{(a)}]
Suppose the block size is $(q^2,q^2,q^2)$. The block condition is
equivalent to the nonexistence of a nonzero $y_3\in U_2$ such that
$uy_3,vy_3\in U_2$.

Write
\[
u=u_0+u_1\alpha+u_2\alpha^2,\qquad
v=v_0+v_1\alpha+v_2\alpha^2,
\]
and let $y_3=a_0+a_1\alpha\in U_2$. Using
$\alpha^3=-c_2\alpha^2-c_1\alpha-c_0$, the coefficient of
$\alpha^2$ in $uy_3$ is
\[
u_2a_0+(u_1-c_2u_2)a_1,
\]
and similarly the coefficient of $\alpha^2$ in $vy_3$ is
$v_2a_0+(v_1-c_2v_2)a_1$. Thus $uy_3,vy_3\in U_2$ precisely when
\[
\begin{pmatrix}
u_2 & u_1-c_2u_2\\
v_2 & v_1-c_2v_2
\end{pmatrix}
\begin{pmatrix}
a_0\\
a_1
\end{pmatrix}
=
\begin{pmatrix}
0\\
0
\end{pmatrix}.
\]
This system has a nonzero solution if and only if its determinant
vanishes. Since
\[
u_2(v_1-c_2v_2)-v_2(u_1-c_2u_2)=u_2v_1-u_1v_2,
\]
the block condition holds if and only if
$u_1v_2-u_2v_1\ne0$.

This says that $(u_1,u_2)$ and $(v_1,v_2)$ are linearly independent
in $\mathbb F_q^2$. There are $q^2-1$ choices for the first vector
and $q^2-q$ choices for the second outside its span. The coefficients
$u_0$ and $v_0$ are arbitrary, giving $q^2$ further choices. Hence
there are
\[
q^2(q^2-1)(q^2-q)
\]
valid pairs $(u,v)$. Linear independence also implies $u\ne0$ and
$v\ne0$, so the Latin condition is automatic. This proves
\textup{(a)}.

\item[\textup{(b)}]
For block size $(q^3,q^2,q)$, the block condition asks whether there
is a nonzero $y_3\in U_1=\mathbb F_q$ such that
$uy_3\in K$ and $vy_3\in U_2$. The first condition is automatic.
Since $y_3\in\mathbb F_q^\times$ and $U_2$ is an
$\mathbb F_q$-subspace,
\[
vy_3\in U_2
\quad\Longleftrightarrow\quad
v\in U_2.
\]
Hence the block condition is $v\notin U_2$.

There are $q^3-q^2=q^2(q-1)$ choices for $v\notin U_2$, and the
Latin condition additionally requires $u\ne0$, giving $q^3-1$
choices for $u$. Thus there are
\[
q^2(q-1)(q^3-1)
\]
valid pairs $(u,v)$. After permuting the first two coordinates,
Lemma~\ref{BlockBound} gives the same number as an
upper bound. Hence this family is best possible, proving
\textup{(b)}.

\item[\textup{(c)}]
For block size $(q^3,q^3,1)$, we have $U_t=U_0=\{0\}$, so there is
no nonzero $y_3\in U_t$. Thus the block condition is automatic.
Hence $(u,v)\in K^\times\times K^\times$ gives a valid hypercube,
yielding
\[
(q^3-1)^2
\]
mutually orthogonal Sudoku hypercubes. After permuting the coordinates
so that the side of length $1$ is first,
Lemma~\ref{BlockBound} gives $(q^3-1)^2$ as an upper
bound. Hence this family is best possible, proving \textup{(c)}.

\end{enumerate}
\end{proof}

\section{Open Problems}
\label{OpenProblems}

A partition of a $d$-dimensional array of order $n$ into
$n$ regions, all containing $n^{d-1}$ cells, is called a
\emph{Gerechte framework}. It is \emph{realizable} if there exists
an $(n,d,d-1,1)$ Latin hypercube in which every region contains each
symbol exactly once. A framework together with such a realization is
a \emph{Gerechte design}. Classical Sudoku is the case $d=2$ and
$n=9$, with the nine $3\times3$ blocks as the regions. For
background on two-dimensional Gerechte designs, see
\cite{sudokugerechtehamming}.

Courtiel and Vaughan~\cite{Courtiel_2011} proved that every
two-dimensional Gerechte framework whose regions are $s\times t$
or $t\times s$ rectangles is realizable. This suggests the following
higher-dimensional analogue.

\begin{conjecture}
\label{MixedGerechte}
Let $a_1,\dots,a_d$ be positive integers, let
$n=\prod_{i\in[d]}a_i$, and let $b_i=n/a_i$ for $i\in[d]$.
Suppose $R_1,\dots,R_n$ is a Gerechte framework such that, for
$m\in[n]$,
\[
R_m=A_{m,1}\times\cdots\times A_{m,d},
\]
where $A_{m,i}\subseteq[n]$ and, for some permutation $\pi_m$ of
$[d]$,
\[
|A_{m,i}|=b_{\pi_m(i)}
\qquad\text{for }i\in[d].
\]
Then the framework is realizable.
\end{conjecture}

Since $\prod_{i\in[d]}b_i=n^{d-1}$, the regions have the required
number of cells. Theorem~\ref{Main} settles the conjecture
when the regions arise from one fixed system of coordinate
partitions. The open case allows the coordinate subsets defining the
regions to vary from region to region.

Our second problem concerns orthogonality. For block sizes
$(q^3,q^2,q)$ and $(q^3,q^3,1)$,
Theorem~\ref{FFMain} determines the exact maximum size of a
mutually orthogonal family. For the symmetric block size
$(q^2,q^2,q^2)$, the exact maximum remains open. Our construction
gives a family of size $q^2(q^2-1)(q^2-q)$, while
Lemma~\ref{BlockBound} gives the upper bound
\[
q^2(q^2-1)^2.
\]
Thus the maximum lies between these two quantities. We conjecture
that the block bound is sharp in the symmetric case.

\begin{conjecture}
\label{SymmetricOrtho}
For a prime power $q$, the maximum size of a family of mutually
orthogonal Sudoku hypercubes of order $q^3$ and block size
$(q^2,q^2,q^2)$ is
\[
q^2(q^2-1)^2.
\]
\end{conjecture}

Our construction falls short of the conjectured maximum by the factor
$(q+1)/q=1+1/q$, and is therefore asymptotically optimal as
$q\to\infty$.

Finally, a connected version of Baranyai's theorem
\cite{BahCCA26} suggests a connected analogue of
Theorem~\ref{Main}. Since $K_{n\times d}^d$ has $dn$
vertices and its edges have size $d$, a connected spanning
subhypergraph must have at least
\[
\left\lceil\frac{dn-1}{d-1}\right\rceil
\]
edges. We conjecture that this necessary condition is also sufficient.

\begin{conjecture}
\label{ConnectedBaranyai}
Let $d\ge2$ and let $G=\lambda K_{n\times d}^d$, where
$n=\prod_{i\in[d]}a_i$ and $b_i=n/a_i$, with a fixed
$(b_1,\dots,b_d)$-block partition. If $m_1,\dots,m_k$ are positive
integers satisfying
\[
\sum_{j\in[k]}m_j=\lambda n^d,
\]
then the coloring in Theorem~\ref{Main} can be chosen so that
$G(j)$ is connected whenever
\[
m_j\ge
\left\lceil\frac{dn-1}{d-1}\right\rceil.
\]
\end{conjecture}

\subsubsection*{Acknowledgments}
Amin Bahmanian's research is partially supported by a  Faculty Research Award at Illinois State University. Sho Suda's research is supported by JSPS KAKENHI Grant Number 22K03410, 26K06904.

\bibliographystyle{plain}
\bibliography{sudoku3dbib}

@article {MR785657,
    AUTHOR = {Dinitz, Jeffrey H.},
     TITLE = {Orthogonal one-factorization graphs},
   JOURNAL = {J. Graph Theory},
  FJOURNAL = {Journal of Graph Theory},
    VOLUME = {9},
      YEAR = {1985},
    NUMBER = {1},
     PAGES = {147--159},
      ISSN = {0364-9024,1097-0118},
   MRCLASS = {05C70 (05B15)},
  MRNUMBER = {785657},
MRREVIEWER = {W.\ D.\ Wallis},
       DOI = {10.1002/jgt.3190090112},
       URL = {https://doi.org/10.1002/jgt.3190090112},
}

@article {MR3942272,
    AUTHOR = {Meszka, Mariusz and Tyniec, Magdalena},
     TITLE = {Orthogonal one-factorizations of complete multipartite graphs},
   JOURNAL = {Des. Codes Cryptogr.},
  FJOURNAL = {Designs, Codes and Cryptography. An International Journal},
    VOLUME = {87},
      YEAR = {2019},
    NUMBER = {5},
     PAGES = {987--993},
      ISSN = {0925-1022,1573-7586},
   MRCLASS = {05C70 (05B15)},
  MRNUMBER = {3942272},
MRREVIEWER = {Emine\ \c Sule\ Yaz\i c\i},
       DOI = {10.1007/s10623-018-0504-3},
       URL = {https://doi.org/10.1007/s10623-018-0504-3},
}

@article {MR0916377,
    AUTHOR = {Nash-Williams, C. St. J. A.},
     TITLE = {Amalgamations of almost regular edge-colourings of simple
              graphs},
   JOURNAL = {J. Combin. Theory Ser. B},
  FJOURNAL = {Journal of Combinatorial Theory. Series B},
    VOLUME = {43},
      YEAR = {1987},
    NUMBER = {3},
     PAGES = {322--342},
      ISSN = {0095-8956,1096-0902},
   MRCLASS = {05C75},
  MRNUMBER = {916377},
MRREVIEWER = {H.\ Joseph\ Straight},
       DOI = {10.1016/0095-8956(87)90008-6},
       URL = {https://doi.org/10.1016/0095-8956(87)90008-6},
}

@article {MR2399374,
    AUTHOR = {McKay, Brendan D. and Wanless, Ian M.},
     TITLE = {A census of small {L}atin hypercubes},
   JOURNAL = {SIAM J. Discrete Math.},
  FJOURNAL = {SIAM Journal on Discrete Mathematics},
    VOLUME = {22},
      YEAR = {2008},
    NUMBER = {2},
     PAGES = {719--736},
      ISSN = {0895-4801},
   MRCLASS = {05B15 (05A15 20N05 20N15 94B25)},
  MRNUMBER = {2399374},
MRREVIEWER = {R. M. Falc\'{o}n},
       DOI = {10.1137/070693874},
       URL = {https://doi.org/10.1137/070693874},
}

@article {MR4665304,
    AUTHOR = {Bahmanian, Amin},
     TITLE = {Symmetric layer-rainbow colorations of cubes},
   JOURNAL = {SIAM J. Discrete Math.},
  FJOURNAL = {SIAM Journal on Discrete Mathematics},
    VOLUME = {37},
      YEAR = {2023},
    NUMBER = {4},
     PAGES = {2617--2625},
      ISSN = {0895-4801,1095-7146},
   MRCLASS = {05B15 (05C15 05C65 05C70)},
  MRNUMBER = {4665304},
MRREVIEWER = {Ying\ Miao},
       DOI = {10.1137/22M1494488},
       URL = {https://doi.org/10.1137/22M1494488},
}

@article {MR535941,
    AUTHOR = {Baranyai, Zsolt},
     TITLE = {The edge-coloring of complete hypergraphs. {I}},
   JOURNAL = {J. Combin. Theory Ser. B},
  FJOURNAL = {Journal of Combinatorial Theory. Series B},
    VOLUME = {26},
      YEAR = {1979},
    NUMBER = {3},
     PAGES = {276--294},
      ISSN = {0095-8956,1096-0902},
   MRCLASS = {05C65 (15A36)},
  MRNUMBER = {535941},
MRREVIEWER = {C.\ St.\ J. A. Nash-Williams},
       DOI = {10.1016/0095-8956(79)90002-9},
       URL = {https://doi.org/10.1016/0095-8956(79)90002-9},
}

@article {BahCCA26,
	author = {Bahmanian, Amin},
	date = {2026/09/03},
	doi = {10.1007/s00493-026-00224-z},
	id = {Bahmanian2026},
	isbn = {1439-6912},
	journal = {Combinatorica},
	number = {5},
	pages = {30},
	title = {Connected Fair Detachments of Hypergraphs {I}},
	url = {https://doi.org/10.1007/s00493-026-00224-z},
	volume = {46},
	year = {2026}}

@misc{KeevashDesignsII,
  author       = {Peter Keevash},
  title        = {The Existence of Designs II},
  year         = {2018},
  eprint       = {1802.05900},
  archivePrefix= {arXiv},
  primaryClass = {math.CO}
}

@book{Bailey2004,
  author    = {Bailey, R. A.},
  title     = {Association Schemes: Designed Experiments, Algebra and Combinatorics},
  series    = {Cambridge Studies in Advanced Mathematics},
  volume    = {84},
  publisher = {Cambridge University Press},
  address   = {Cambridge},
  year      = {2004}
}

@incollection {MR0416986,
    AUTHOR = {Baranyai, Zsolt},
     TITLE = {On the factorization of the complete uniform hypergraph},
 BOOKTITLE = {Infinite and finite sets ({C}olloq., {K}eszthely, 1973;
              dedicated to {P}. {E}rd{\H o}s on his 60th birthday), {V}ol.
              {I}},
     PAGES = {91--108. Colloq. Math. Soc. J\'an\=os Bolyai, Vol. 10},
 PUBLISHER = {North-Holland, Amsterdam},
      YEAR = {1975},
   MRCLASS = {05C99},
  MRNUMBER = {0416986 (54 \#5047)},
MRREVIEWER = {D. L. Greenwell},
}

@article {MR4728465,
    AUTHOR = {Bahmanian, Amin},
     TITLE = {Toward a three-dimensional counterpart of {C}ruse's theorem},
   JOURNAL = {Proc. Amer. Math. Soc.},
  FJOURNAL = {Proceedings of the American Mathematical Society},
    VOLUME = {152},
      YEAR = {2024},
    NUMBER = {5},
     PAGES = {1947--1959},
      ISSN = {0002-9939,1088-6826},
   MRCLASS = {05B15 (05C15 05C65 05C70)},
  MRNUMBER = {4728465},
MRREVIEWER = {Alison\ M.\ Marr},
       DOI = {10.1090/proc/16714},
       URL = {https://doi.org/10.1090/proc/16714},
}

@article {MR13113,
    AUTHOR = {Fisher, R. A.},
     TITLE = {A system of confounding for factors with more than two
              alternatives, giving completely orthogonal cubes and higher
              powers},
   JOURNAL = {Ann. Eugenics},
  FJOURNAL = {Annals of Eugenics. A Journal Devoted to the Genetic Study of
              Human Populations},
    VOLUME = {12},
      YEAR = {1945},
     PAGES = {283--290},
      ISSN = {2050-1420},
   MRCLASS = {09.0X},
  MRNUMBER = {13113},
MRREVIEWER = {H. S. M. Coxeter},
}

@article {MR0004235,
    AUTHOR = {Richardson, A. R.},
     TITLE = {Algebra of {$s$}-dimensions},
   JOURNAL = {Proc. London Math. Soc. (2)},
  FJOURNAL = {Proceedings of the London Mathematical Society. Second Series},
    VOLUME = {47},
      YEAR = {1940},
     PAGES = {38--59},
      ISSN = {0024-6115},
   MRCLASS = {09.1X},
  MRNUMBER = {4235},
MRREVIEWER = {O.\ Ore},
       DOI = {10.1112/plms/s2-47.1.38},
       URL = {https://doi.org/10.1112/plms/s2-47.1.38},
}

@article {MR34743,
    AUTHOR = {Kishen, K.},
     TITLE = {On the construction of latin and hyper-graeco-latin cubes and
              hypercubes},
   JOURNAL = {J. Indian Soc. Agric. Statist.},
  FJOURNAL = {Journal of the Indian Society of Agricultural Statistics},
    VOLUME = {2},
      YEAR = {1949},
     PAGES = {20--48},
      ISSN = {0019-6363},
   MRCLASS = {09.0X},
  MRNUMBER = {34743},
MRREVIEWER = {H.\ B.\ Mann},
}

@article {MR3259813,
    AUTHOR = {Linial, Nathan and Luria, Zur},
     TITLE = {An upper bound on the number of high-dimensional permutations},
   JOURNAL = {Combinatorica},
  FJOURNAL = {Combinatorica. An International Journal on Combinatorics and
              the Theory of Computing},
    VOLUME = {34},
      YEAR = {2014},
    NUMBER = {4},
     PAGES = {471--486},
      ISSN = {0209-9683,1439-6912},
   MRCLASS = {05A05 (05A16)},
  MRNUMBER = {3259813},
MRREVIEWER = {Arnold\ Knopfmacher},
       DOI = {10.1007/s00493-011-2842-8},
       URL = {https://doi.org/10.1007/s00493-011-2842-8},
}

@article {MR2860603,
    AUTHOR = {Ethier, John T. and Mullen, Gary L. and Panario, Daniel and
              Stevens, Brett and Thomson, David},
     TITLE = {Sets of orthogonal hypercubes of class {$r$}},
   JOURNAL = {J. Combin. Theory Ser. A},
  FJOURNAL = {Journal of Combinatorial Theory. Series A},
    VOLUME = {119},
      YEAR = {2012},
    NUMBER = {2},
     PAGES = {430--439},
      ISSN = {0097-3165,1096-0899},
   MRCLASS = {05B15},
  MRNUMBER = {2860603},
MRREVIEWER = {Ilene\ H.\ Morgan},
       DOI = {10.1016/j.jcta.2011.10.001},
       URL = {https://doi.org/10.1016/j.jcta.2011.10.001},
}

@article {MR3600882,
    AUTHOR = {Huggan, M. and Mullen, G. L. and Stevens, B. and Thomson, D.},
     TITLE = {Sudoku-like arrays, codes and orthogonality},
   JOURNAL = {Des. Codes Cryptogr.},
  FJOURNAL = {Designs, Codes and Cryptography. An International Journal},
    VOLUME = {82},
      YEAR = {2017},
    NUMBER = {3},
     PAGES = {675--693},
      ISSN = {0925-1022,1573-7586},
   MRCLASS = {94B25 (05B15 05B20 05B25)},
  MRNUMBER = {3600882},
MRREVIEWER = {Ying\ Miao},
       DOI = {10.1007/s10623-016-0190-y},
       URL = {https://doi.org/10.1007/s10623-016-0190-y},
}

@article {MR4978378,
    AUTHOR = {Anagnostopoulou-Merkouri, Marina and Bailey, R. A. and
              Cameron, Peter J.},
     TITLE = {Permutation groups, partition lattices and block structures},
   JOURNAL = {Forum Math. Sigma},
  FJOURNAL = {Forum of Mathematics. Sigma},
    VOLUME = {13},
      YEAR = {2025},
     PAGES = {Paper No. e180, 32},
      ISSN = {2050-5094},
   MRCLASS = {20B05 (06B99 62K10)},
  MRNUMBER = {4978378},
MRREVIEWER = {Enoch\ Suleiman},
       DOI = {10.1017/fms.2025.10126},
       URL = {https://doi.org/10.1017/fms.2025.10126},
}

@article{Courtiel_2011,
   title={Gerechte designs with rectangular regions},
   volume={20},
   ISSN={1520-6610},
   url={http://dx.doi.org/10.1002/jcd.20300},
   DOI={10.1002/jcd.20300},
   number={2},
   journal={Journal of Combinatorial Designs},
   publisher={Wiley},
   author={Courtiel, J. and Vaughan, E. R.},
   year={2011},
   month=sep, pages={112–123} }

@article{sudokugerechtehamming,
author = {R. A. Bailey, Peter J. Cameron and Robert Connelly},
title = {Sudoku, Gerechte Designs, Resolutions, Affine Space, Spreads, Reguli, and Hamming Codes},
journal = {The American Mathematical Monthly},
volume = {115},
number = {5},
pages = {383-404},
year = {2008},
publisher = {Taylor & Francis},
doi = {10.1080/00029890.2008.11920542},
URL = { 
        https://doi.org/10.1080/00029890.2008.11920542
},
eprint = { 
        https://doi.org/10.1080/00029890.2008.11920542
}
}

@article{Delsarte1973,
  author  = {Delsarte, Philippe},
  title   = {An algebraic approach to the association schemes of coding theory},
  journal = {Philips Res. Rep. Suppl.},
  volume  = {10},
  year    = {1973},
  pages   = {vi+97}
}

@article{Delsarte1976,
  author  = {Delsarte, Philippe},
  title   = {Association schemes and $t$-designs in regular semilattices},
  journal = {J. Combin. Theory Ser. A},
  volume  = {20},
  year    = {1976},
  pages   = {230--243},
  doi     = {10.1016/0097-3165(76)90017-0}
}

@article{Delsarte1977,
  author  = {Delsarte, Philippe},
  title   = {Pairs of vectors in the space of an association scheme},
  journal = {Philips Res. Rep.},
  volume  = {32},
  year    = {1977},
  pages   = {373--411}
}

@article{Martin1999,
  author    = {William J. Martin},
  title     = {Designs in Product Association Schemes},
  journal   = {Designs, Codes and Cryptography},
  volume    = {16},
  number    = {3},
  pages     = {271--289},
  year      = {1999},
  doi       = {10.1023/A:1008399515082},
  url       = {https://springer.com}
}

\end{document}